\newtheorem{theorem}{Theorem}
\newtheorem{proposition}[theorem]{Proposition}
\newtheorem{corollary}[theorem]{Corollary}
\newtheorem{num_example}{Example}
\newcommand{\stella}{T} 
\renewcommand{\phi}{\varphi}
\newcommand{\wt}{\widetilde}
\newcommand{\wh}{\widehat}
\begin{document}

\title{Palindromic linearization and numerical solution of nonsymmetric algebraic $\stella$-Riccati equations\footnote{Version of \today. This work was partly supported by INdAM through GNCS projects.}}

\author{Peter Benner\thanks{Department Computational Methods in Systems and Control Theory (CSC), Max Planck Institute for Dynamics
of Complex Technical Systems, Magdeburg, Germany, %
  {\tt benner@mpi-magdeburg.mpg.de}} \and Bruno Iannazzo\thanks{Universit\`a degli Studi di Perugia, 
  Italy, {\tt bruno.iannazzo@unipg.it}} \and Beatrice Meini\thanks{Universit\`a di Pisa, Italy, {\tt beatrice.meini@unipi.it}} \and Davide Palitta\thanks{Universita di Bologna, Centro $AM^2$, Dipartimento di Matematica, Piazza di Porta S.
Donato 5, 40127 Bologna (Italy),   {\tt davide.palitta@unibo.it}} }

\date{}

\maketitle





\begin{abstract}
We identify a relationship between the solutions of a nonsymmetric algebraic $\stella$-Riccati equation ($\stella$-NARE) and the deflating subspaces of a palindromic matrix pencil, obtained by arranging the coefficients of the $\stella$-NARE.
The interplay between $\stella$-NARE and palindromic pencils allows one to derive both theoretical properties of the solutions of the equation, and new methods for its numerical solution.
In particular, we propose methods based on the (palindromic) QZ algorithm and the doubling algorithm, whose effectiveness is demonstrated by several numerical tests.\\[2ex]
{\bf Keywords}: $\stella$-Riccati, algebraic Riccati equation, matrix equation, palindromic matrix pencil, structured doubling algorithm, invariant subspaces, palindromic QZ algorithm.
\end{abstract}

\section{Introduction}

We consider the Nonsymmetric Algebraic $\stella$-Riccati equation ($\stella$-NARE)
\begin{equation}\label{eq:starnare}
DX+X^\stella A-X^\stella BX+C=0,
\end{equation}
where $A,B,C,D\in\mathbb{R}^{n\times n}$ are coefficient matrices and the $n\times n$ matrix $X$ is the unknown, while the superscript $\stella$ denotes transposition.

Equation \eqref{eq:starnare} has been considered in \cite{bp20}, with applications to Dynamic Stochastic General Equilibrium models. 
In the same paper, existence results are given in terms of nonnegativity properties of the coefficients, together with an analysis of the convergence of Netwon's method to the required solution. Recently, the problem of the existence of solutions, when $A=D=0$ and $C$ is symmetric, has been studied in \cite{borobia}.

The nonsymmetric $\stella$-Riccati equation takes the name from the Nonsymmetric Algebraic Riccati Equation (NARE)
\begin{equation}\label{eq:nare}
	DX+XA-XBX+C=0,
\end{equation}
that has received great attention in the literature in the last decades (see for instance the book \cite{bim:book}), and from the $\stella$ operator applied to the unknown $X$ in~\eqref{eq:starnare}, when this premultiplies a matrix coefficient.
Indeed, recently, there has been a great interest in the $\stella$ counterpart of classical linear matrix equations. For instance, look at the Sylvester equation $AX+XB=C$, whose $\stella$ counterpart $AX^\stella+X B=C$ has been the subject of a number of both computational and theoretical papers; see, e.g., \cite{bk}, \cite{dd11}, \cite{di16}, \cite{dipr18}, \cite{dipr19}, \cite{dgks16}, \cite{jp}.

It is well-known that there is a strict connection between equation \eqref{eq:nare} and the matrix
\begin{equation}\label{eq:H}
	H=\begin{bmatrix} A & -B\\-C & -D\end{bmatrix}.
\end{equation}
More specifically, $X$ is a solution to \eqref{eq:nare} if and only if there exists an $n$-dimensional invariant subspace of $H$ spanned by the columns of $\begin{bmatrix} I \\ X\end{bmatrix}$, where $I$ is the identity matrix (compare \cite[Thm.~2.1]{bim:book}).
This property reduces a nonlinear equation to an eigenvalue problem, that can be associated with the linear matrix polynomial $H-zI$. For this reason, perhaps with some abuse, we say that $H-zI$ is a linearization of the matrix equation \eqref{eq:nare}. (Note, however, the similarity with the linearization procedure for matrix polynomials.)
 
Our main contribution is the introduction of a linearization for the \mbox{$\stella$-NARE}~\eqref{eq:starnare}. More specifically, we define the $\stella$-palindromic pencil $\varphi(z)=M+zM^\stella$, where
\[
M=\begin{bmatrix}
C & D\\A & -B\end{bmatrix},
\]
and $A$, $B$, $C$, and $D$ are the matrix coefficients in \eqref{eq:starnare}.
We show that, if the pencil $\varphi(z)$ is regular and if $X$ is a solution to \eqref{eq:starnare}, then the columns of $\begin{bmatrix} I\\X\end{bmatrix}$ span a deflating subspace of $\varphi(z)$ and also a partial converse result holds. 
The precise statement is given in Theorem \ref{thm:main}. 
We say that the pencil $\varphi(z)$ is a linearization of the $T$-NARE \eqref{eq:starnare}.
We also relate the solutions of the $T$-NARE with the solutions of a suitable Discrete-time Algebraic Riccati equation. Moreover, we show existence properties of the solutions under nonnegativity properties of the matrix coefficients, by weakening the assumptions of \cite{bp20}.

The linearization of the $T$-NARE, besides being \emph{per se} interesting, is exploited to relate the solutions to~\eqref{eq:starnare} with the solution of certain discrete-time algebraic Riccati equations. More practically, it opens the way to compute the solution of a $\stella$-NARE by relying on invariant subspaces algorithms, such as the QZ and the Doubling Algorithm (DA), that in our tests are shown to be more efficient than Newton's method, the reference algorithm in \cite{bp20}.

Moreover, it is possible to exploit the palindromic structure of the linearization by applying the palindromic QZ algorithm, a structured variant of the QZ, that we endow with an ordering procedure and that is shown to be superior, in terms of forward error, in some difficult problems.

The paper is organized as follows: Section \ref{sec:prel} provides some preliminary material on matrix pencils, in Section \ref{sec:line} we present the interplay between $\stella$-NAREs and $T$-palindromic pencils, while Section \ref{sec:posi} is devoted to the case of matrix coefficients with nonnegativity properties. In Section \ref{sec:algs} we describe the new algorithms for the $\stella$-NARE and some tests are performed in Section \ref{sec:test}. The final section draws some conclusions.


\section{Preliminaries}\label{sec:prel}

We recall some properties of matrix pencils that will be used in the sequel. For more details on this subject we refer the reader to \cite{glr}.
 
A matrix pencil $p(z)=A+zB$, with $A,B\in\mathbb C^{n\times n}$ is said to be regular if $\det(p(z))$ is not identically 0. The finite eigenvalues of a regular pencil $p(z)$ are the zeros of $\det(p(z))$, while infinity is an eigenvalue of $p(z)$ with multiplicity $d\ge 1$ if $\det(p(z))$ has degree $n-d$. Notice that, for a regular pencil, $0$ is an eigenvalue if and only if $A$ is singular, while $\infty$ is an eigenvalue if and only if $B$ is singular.

A $k$-dimensional subspace $\mathcal V\subset \mathbb C^n$ is a deflating subspace of the regular matrix pencil $p(z)=A+zB$ if there exists a $k$-dimensional subspace $\mathcal W\subset \mathbb C^n$ such that $A\mathcal V\subset \mathcal W$ and $B\mathcal V\subset \mathcal W$. If the columns of $V\in\mathbb C^{n\times k}$ and $W\in\mathbb C^{n\times k}$ span the subspaces $\mathcal V$ and $\mathcal W$, respectively, then there exist $A',B'\in\mathbb C^{k\times k}$ such that $AV=WA'$ and $BV=WB'$. The eigenvalues of the $k\times k$ pencil $A'+zB'$ are a subset of the eigenvalues of $p(z)$ and are said to be the eigenvalues of $p(z)$ associated with the deflating subspace $\mathcal V$.

A deflating subspace $\mathcal V$ is said to be a graph deflating subspace if, for any matrix $V\in\mathbb C^{n\times k}$ whose columns span $\mathcal V$, the leading $k\times k$ submatrix of $V$ is invertible. In other words, $\mathcal V$ admits a basis made of the columns of the matrix $\left[\begin{smallmatrix} I\\X\end{smallmatrix}\right]$, for some matrix $X$.

A pencil $p(z)=A+zB$ is said to be $\stella$-palindromic if $B=A^\stella$, in this case $p(z)^\stella=zp(1/z)$, that is, if $\lambda$ is an eigenvalue of $p(z)$, then $1/\lambda$ is an eigenvalue of $p(z)$ as well (this holds also for $0$ and $\infty$, with the conventions $1/0=\infty$ and $1/\infty=0$). 


A reciprocal-free set $\mathcal S\subset \mathbb C\cup\{\infty\}$ is a set such that if $\lambda \in\mathcal S$ then $1/\lambda\not\in\mathcal S$.

Given  $x\in\mathbb{C}^n$ and a matrix $S\in\mathbb{C}^{n\times n}$, the vector $x$ is  said to be $S$-isotropic if $x^\stella S x = 0$.  More generally, if $\mathcal V$ 
is the subspace spanned by the columns of a full rank matrix $V\in\mathbb{C}^{n\times k}$, then the subspace $\mathcal V$
and the matrix $V$ are said to be $S$-isotropic if $V^\stella  S V = 0$.



\section{A linearization for nonsymmetric algebraic $\stella$-Riccati equations}\label{sec:line}

In this section we introduce a linearization of the $T$-NARE \eqref{eq:starnare}, which allows us to show theoretical properties of the solutions and to relate
the $\stella$-NARE to a suitable discrete-time algebraic Riccati equation.

We associate with the $\stella$-NARE \eqref{eq:starnare} the 
 $2n\times 2n$ $T$-palindromic pencil 
$	
$
\begin{equation}\label{eq:phi}
\phi(z)=M+zM^\stella,~~M=\left[\begin{matrix}
	C & D\\ A & -B
	\end{matrix}\right].
\end{equation}
The solutions of the $\stella$-NARE~\eqref{eq:starnare} are related to the deflating subspaces of the pencil $\phi(z)$ in \eqref{eq:phi}. In particular, the following result provides a necessary and sufficient condition for a matrix $X$ to be a solution of \eqref{eq:starnare}, in terms of the properties of the deflating subspaces of $\phi(z)$.

\begin{theorem}\label{thm:main}
Assume that the pencil $\varphi(z)$ in \eqref{eq:phi} is regular. If the matrix $X$ is a solution to \eqref{eq:starnare}, then
\begin{equation}\label{eq:defl}
	\varphi(z)\begin{bmatrix} I\\X\end{bmatrix} = \begin{bmatrix} -X^\stella\\ I\end{bmatrix} \alpha(z),
\end{equation}
where
\begin{equation}\label{eq:alpha}
\alpha(z):=A-BX+z(D^\stella-B^\stella X),
\end{equation}
i.e., 
the columns of $\left[\begin{smallmatrix} I\\X\end{smallmatrix}\right]$ span a deflating subspace of the pencil $\varphi(z)$, associated with the 
eigenvalues of the $n\times n$ pencil $\alpha(z)$.

Conversely, if $X$ is an $n\times n$ matrix such that the columns of the matrix $\left[\begin{smallmatrix} I\\X\end{smallmatrix}\right]$ span a deflating subspace
of $\phi(z)$ corresponding to a reciprocal-free set of eigenvalues, then $X$  is a solution to  \eqref{eq:starnare}.
\end{theorem}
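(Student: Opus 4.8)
The plan is to prove the two implications separately. For the first one I would argue by a direct block computation. Writing $M=\left[\begin{smallmatrix}C&D\\A&-B\end{smallmatrix}\right]$ and $M^\stella=\left[\begin{smallmatrix}C^\stella&A^\stella\\D^\stella&-B^\stella\end{smallmatrix}\right]$, one gets
\[
\varphi(z)\begin{bmatrix}I\\X\end{bmatrix}=\begin{bmatrix}C+DX+z(C^\stella+A^\stella X)\\ A-BX+z(D^\stella-B^\stella X)\end{bmatrix}.
\]
The lower block is exactly $\alpha(z)$ of \eqref{eq:alpha}. In the upper block, the constant term equals $-X^\stella(A-BX)$ if and only if $DX+X^\stella A-X^\stella BX+C=0$, i.e.\ \eqref{eq:starnare}, and the coefficient of $z$ equals $-X^\stella(D^\stella-B^\stella X)$ if and only if the transpose of \eqref{eq:starnare} holds, which is the same identity; hence the upper block equals $-X^\stella\alpha(z)$, giving \eqref{eq:defl}. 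It then remains to note that $\alpha$ is regular: since $\left[\begin{smallmatrix}I\\X\end{smallmatrix}\right]$ has full column rank and $\varphi$ is regular, $\det\alpha(z)\not\equiv 0$ (otherwise \eqref{eq:defl} would force $\varphi$ to have infinitely many eigenvalues), so $\operatorname{spec}(\alpha)\subseteq\operatorname{spec}(\varphi)$ and \eqref{eq:defl} exhibits $\operatorname{Im}\left[\begin{smallmatrix}I\\X\end{smallmatrix}\right]$ as a deflating subspace associated with $\operatorname{spec}(\alpha)$.

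For the converse, the crucial observation is that, with $V:=\left[\begin{smallmatrix}I\\X\end{smallmatrix}\right]$, one has $V^\stella M V = C+DX+X^\stella A - X^\stella BX$, so $X$ solves \eqref{eq:starnare} if and only if $V^\stella M V=0$. Thus it suffices to prove the isotropy statement: \emph{if $\varphi(z)=M+zM^\stella$ is regular and the columns of a full-rank $V$ span a deflating subspace associated with a reciprocal-free set of eigenvalues, then $V^\stella M V=0$.} To this end, write $MV=WA'$ and $M^\stella V=WB'$, so that $\beta(z):=A'+zB'$ is regular (the restriction pencil of a regular pencil is regular) with $\operatorname{spec}(\beta)=\mathcal S$ reciprocal-free. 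Using $\stella$-palindromicity in the form $\varphi(z)^\stella=z\,\varphi(1/z)$: transposing $\varphi(z)V=W\beta(z)$ and then replacing $z$ by $1/z$ yields $V^\stella\varphi(z)=\widehat\beta(z)W^\stella$ with $\widehat\beta(z):=(B')^\stella+z(A')^\stella$, whose spectrum is $\{1/\mu:\mu\in\mathcal S\}$, disjoint from $\mathcal S$ by the reciprocal-free hypothesis. Multiplying $\varphi(z)V=W\beta(z)$ on the left by $V^\stella$ and $V^\stella\varphi(z)=\widehat\beta(z)W^\stella$ on the right by $V$, and setting $L:=V^\stella W$, I obtain the generalized $\stella$-Sylvester identity $L\,\beta(z)=\widehat\beta(z)\,L^\stella$ for all $z$.

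From this identity I would deduce $L=0$. For every right (generalized) eigenvector $v$ of the regular pencil $\beta$, associated with an eigenvalue $\mu\in\mathcal S$, one has $\widehat\beta(\mu)L^\stella v=L\beta(\mu)v=0$ (with the standard adjustment via leading coefficients when $\mu=\infty$), and $\widehat\beta(\mu)$ is invertible since $\mu\notin\operatorname{spec}(\widehat\beta)$; as such vectors span $\mathbb C^n$, this forces $L^\stella=0$, hence $L=0$. Equivalently, one may first reduce to $0,\infty\notin\mathcal S$ (so that $A',B'$ are invertible) and rewrite the identity as a Stein equation $F\,L^\stella G=L^\stella$ with $\operatorname{spec}(F)=\operatorname{spec}(G)=\{-1/\mu:\mu\in\mathcal S\}$, which has only the trivial solution precisely because no product of two elements of that set equals $1$ — again the reciprocal-free condition, which also forbids the self-reciprocal values $\pm1$ in $\mathcal S$. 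Once $L=0$ we get $V^\stella M V=V^\stella W A'=L A'=0$, and for $V=\left[\begin{smallmatrix}I\\X\end{smallmatrix}\right]$ this is exactly \eqref{eq:starnare}.

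The step I expect to be the main obstacle is the passage from the pencil identity $L\beta(z)=\widehat\beta(z)L^\stella$ to $L=0$, carried out cleanly and in full generality: correctly treating the eigenvalues $0$ and $\infty$, possible Jordan chains of $\beta$, and the precise reading of ``reciprocal-free set of eigenvalues'' (in particular that it excludes $\pm1$). The regular-pencil spanning argument, or the Stein-operator reformulation, should make this routine, but it is the point at which the reciprocal-free hypothesis is genuinely used and hence the heart of the theorem.
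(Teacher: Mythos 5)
Your forward direction coincides with the paper's: the same block computation, using \eqref{eq:starnare} and its transposed form to identify the upper block with $-X^\stella\alpha(z)$. For the converse you take a genuinely different route. The paper reduces the claim to the observation that $X$ solves \eqref{eq:starnare} iff $V=\left[\begin{smallmatrix}I\\X\end{smallmatrix}\right]$ is $M$-isotropic, and then closes the argument by citing \cite[Thm.~3.3]{m4}, which asserts precisely that a deflating subspace of a regular $\stella$-palindromic pencil associated with a reciprocal-free set of eigenvalues is $M$-isotropic. You instead prove that isotropy statement from scratch: from $\varphi(z)V=W\beta(z)$ and $\varphi(z)^\stella=z\varphi(1/z)$ you get $V^\stella\varphi(z)=\widehat\beta(z)W^\stella$, hence $L\,\beta(z)=\widehat\beta(z)\,L^\stella$ with $L=V^\stella W$, and you argue $L=0$, whence $V^\stella MV=LA'=0$. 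This makes the converse self-contained (what the citation buys the paper is exactly not having to handle the degenerate cases you worry about), and the structure of your argument is sound; but two points need tightening. First, your primary argument for $L=0$ (evaluating at eigenvectors of $\beta$) is insufficient when $\beta$ is defective, since eigenvectors then do not span $\mathbb{C}^n$ — you flag this yourself. Second, the ``reduction to $0,\infty\notin\mathcal S$'' in your Stein variant is asserted, not proved; fortunately it is unnecessary: reciprocal-freeness already forbids $0$ and $\infty$ from both lying in $\mathcal S$, so at least one of $A',B'$ is invertible, and (say, for $B'$ invertible) the two relations $LA'=(B')^\stella L^\stella$ and $LB'=(A')^\stella L^\stella$ combine into $L=\bigl((B')^{-1}A'\bigr)^{\stella}\,L\,\bigl(A'(B')^{-1}\bigr)$, a Stein equation whose coefficient spectra are $\{-\mu:\mu\in\mathcal S\}$; reciprocal-freeness excludes $\mu_i\mu_j=1$ (in particular $\pm1\notin\mathcal S$), so the only solution is $L=0$, with the symmetric argument when instead $A'$ is invertible. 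With that repair your converse is a complete, citation-free proof of the isotropy lemma the paper imports from \cite{m4}.
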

\begin{proof}
If $X$ is a solution to \eqref{eq:starnare}, then we have 
\begin{equation}\label{eq:1}
	M\begin{bmatrix}
	I \\ X
	\end{bmatrix}
		=
   \begin{bmatrix}
	-X^\stella \\ I
	\end{bmatrix}(A-B X).
\end{equation}
Moreover, $X$ solves also the equation obtained by applying the $\stella$ operator to all the summands of equation \eqref{eq:starnare}, i.e.,
\[
X^\stella D^\stella +A^\stella X -X^\stella B^\stella X+C^\stella=0.
\]
Therefore, one has
\begin{equation}\label{eq:2}
	M^\stella\begin{bmatrix}
	I \\ X
	\end{bmatrix}
=	\begin{bmatrix}
	C^\stella & A^\stella\\ D^\stella & -B^\stella
	\end{bmatrix}
	\begin{bmatrix}
	I \\ X
	\end{bmatrix}
		=
   \begin{bmatrix}
	-X^\stella \\ I
	\end{bmatrix}(D^\stella-B^\stella X).
\end{equation}
From equations \eqref{eq:1} and \eqref{eq:2} we obtain 
\[
	\varphi(z)\begin{bmatrix} I\\X\end{bmatrix} = \begin{bmatrix} -X^\stella\\ I\end{bmatrix} (A-BX+z(D^\stella-B^\stella X)),
\]
i.e., the columns of $\left[\begin{smallmatrix} I\\X\end{smallmatrix}\right]$ span a deflating subspace of the
pencil $\phi(z)$, corresponding to the eigenvalues of the pencil $\alpha(z)$.

Conversely, assume that the columns of $\left[\begin{smallmatrix} I\\X\end{smallmatrix}\right]$ span a deflating subspace of $\phi(z)$, 
such that the corresponding eigenvalues form a reciprocal-free set.
  
 We may easily observe that $X$ is a solution to \eqref{eq:starnare} if and only if 
\[
\begin{bmatrix}
	I & X^\stella
	\end{bmatrix} M
	\begin{bmatrix}
	I\\ X 
	\end{bmatrix}=0,
\]
i.e., the matrix $\left[\begin{smallmatrix} I\\X\end{smallmatrix}\right]$ is $M$-isotropic. Since $\phi(z)=M+zM^\stella$ is a regular \mbox{$\stella$-palindromic} pencil, the matrix $\left[\begin{smallmatrix} I\\X\end{smallmatrix}\right]$ is $M$-isotropic thanks to~\cite[Thm.~3.3]{m4}, and thus
$X$ solves 
\eqref{eq:starnare}. 
\end{proof}

In the following, when 
$X$ is a solution to \eqref{eq:starnare} such that the columns of $\left[\begin{smallmatrix} I\\X\end{smallmatrix}\right]$ span a deflating subspace of the regular pencil $\varphi(z)$, associated with the eigenvalues 
$\lambda_1,\ldots,\lambda_n$, 
we say that $X$ is associated with the 
eigenvalues $\lambda_1,\ldots,\lambda_n$ of $\varphi(z)$ as well.
Notice that, in general, the solution $X$ is a complex matrix. If the complex eigenvalues in the set $\{
\lambda_1,\ldots,\lambda_n\}$ appear in complex conjugate pairs, then the corresponding deflating subspace of $\varphi(z)$ is spanned by the columns of a real matrix, therefore the
solution $X$ is real.


An interesting consequence of Theorem~\ref{thm:main} is that certain solutions of the $\stella$-NARE are solutions of a suitable Discrete-time Algebraic Riccati Equation (DARE).

\begin{corollary}\label{thm:dare}
If $X$ is a solution to \eqref{eq:starnare}
such that $\det(A-BX)\ne 0$, then $X$ is a solution to
\begin{equation}\label{eq:dare1}
C^\stella+A^\stella X=(C+DX)(A-BX)^{-1}(D^\stella -B^\stella X).
\end{equation}
Similarly, if $X$ is a solution to \eqref{eq:starnare}
such that  $\det(D^\stella-B^\stella X)\ne 0$ then
 $X$ is a solution to
\[
C+DX=(C^\stella+A^\stella X)(D^\stella-B^\stella X)^{-1}(A-BX).
\]
\end{corollary}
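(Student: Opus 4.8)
The plan is to read the two identities off equations \eqref{eq:1} and \eqref{eq:2} from the proof of Theorem~\ref{thm:main} and then eliminate the common factor $\left[\begin{smallmatrix} -X^\stella\\ I\end{smallmatrix}\right]$ between them. Concretely, from \eqref{eq:1} we have $M\left[\begin{smallmatrix} I\\ X\end{smallmatrix}\right]=\left[\begin{smallmatrix} -X^\stella\\ I\end{smallmatrix}\right](A-BX)$, whose first block row gives $C+DX=-X^\stella(A-BX)$ and whose second block row gives $A-BX=A-BX$ (a tautology). Likewise \eqref{eq:2} gives, reading the first block row, $C^\stella+A^\stella X=-X^\stella(D^\stella-B^\stella X)$. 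So under the hypothesis $\det(A-BX)\neq 0$ one solves the first relation for $-X^\stella=(C+DX)(A-BX)^{-1}$ and substitutes into the second, obtaining exactly \eqref{eq:dare1}. For the second assertion, assume instead $\det(D^\stella-B^\stella X)\neq 0$, solve the first block row of \eqref{eq:2} for $-X^\stella=(C^\stella+A^\stella X)(D^\stella-B^\stella X)^{-1}$, and substitute into the first block row of \eqref{eq:1}, which yields the displayed dual DARE.

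First I would recall that equations \eqref{eq:1} and \eqref{eq:2} were established in the proof of Theorem~\ref{thm:main} \emph{from the sole assumption that $X$ solves \eqref{eq:starnare}}; no regularity of $\varphi(z)$ is needed for them, so the corollary's hypotheses are exactly the invertibility conditions stated. Then I would extract the two scalar (i.e. block) identities $C+DX=-X^\stella(A-BX)$ and $C^\stella+A^\stella X=-X^\stella(D^\stella-B^\stella X)$ by looking at the top block rows of \eqref{eq:1} and \eqref{eq:2} respectively. Finally I would do the two substitutions described above, being careful that in the first case one multiplies $(A-BX)^{-1}$ on the right of $C+DX$ and feeds the result into $C^\stella+A^\stella X = -X^\stella(D^\stella-B^\stella X)$, giving
\[
C^\stella+A^\stella X=(C+DX)(A-BX)^{-1}(D^\stella-B^\stella X),
\]
and symmetrically for the second case.

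There is really no serious obstacle here: the statement is an immediate algebraic consequence of the two block identities that already appear in the proof of Theorem~\ref{thm:main}. The only point requiring a little care is bookkeeping with the transpose and with left versus right multiplication by the inverses, so that the factors land in the stated order; in particular one should note that $-X^\stella$ admits the two different closed forms $(C+DX)(A-BX)^{-1}$ and $(C^\stella+A^\stella X)(D^\stella-B^\stella X)^{-1}$ depending on which invertibility hypothesis is in force, and each corollary statement uses exactly one of them. An alternative, slightly slicker, route is to write the pair \eqref{eq:1}--\eqref{eq:2} as $M\left[\begin{smallmatrix} I\\ X\end{smallmatrix}\right]=\left[\begin{smallmatrix} -X^\stella\\ I\end{smallmatrix}\right](A-BX)$ and $M^\stella\left[\begin{smallmatrix} I\\ X\end{smallmatrix}\right]=\left[\begin{smallmatrix} -X^\stella\\ I\end{smallmatrix}\right](D^\stella-B^\stella X)$, then use the first to express $\left[\begin{smallmatrix} -X^\stella\\ I\end{smallmatrix}\right]=M\left[\begin{smallmatrix} I\\ X\end{smallmatrix}\right](A-BX)^{-1}$ and plug into the second, reading off the first block row of the resulting $2n\times n$ identity; but the block-row computation above is the cleanest to present.
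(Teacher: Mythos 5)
Your proof is correct and is essentially the paper's own argument: the paper rewrites \eqref{eq:starnare} as $X^\stella(A-BX)+C+DX=0$ (your top block row of \eqref{eq:1}), solves for $X^\stella=-(C+DX)(A-BX)^{-1}$, and post-multiplies by $D^\stella-B^\stella X$ using the transposed equation (your top block row of \eqref{eq:2}), which is precisely your substitution. The second case is handled symmetrically in both, so no further comment is needed.
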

\begin{proof}
Since $X^\stella (A-BX)+C+DX=0$,
if $A-BX$ is invertible, then $X^\stella =-(C+DX)(A-BX)^{-1}$. By post-multiplying the latter equation by $D^\stella -B^\stella X$ and by using \eqref{eq:starnare}, one gets \eqref{eq:dare1}. The case where $\det(D^\stella -B^\stella X)\ne 0$  can be treated analogously.
\end{proof}

The following result gives sufficient conditions under which the assumptions of Corollary~\ref{thm:dare} are satisfied.

\begin{proposition}
Assume that the pencil $\varphi(z)$ in \eqref{eq:phi} is regular.
If $X$ is a solution to \eqref{eq:starnare}
associated with a reciprocal-free set of eigenvalues of $\varphi(z)$, then at least one between the matrices $A-BX$ and $D^\stella-B^\stella X$ is nonsingular.

If the matrix $M$ in \eqref{eq:phi} is nonsingular, then any solution $X$ to \eqref{eq:starnare} is such that $\det(A-BX)\ne 0$ and  $\det(D^\stella-B^\stella X)\ne 0$.
\end{proposition}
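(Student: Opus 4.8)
The plan is to exploit the factorization \eqref{eq:defl} from Theorem~\ref{thm:main}. Recall that if $X$ solves \eqref{eq:starnare}, then
\[
\varphi(z)\begin{bmatrix} I\\X\end{bmatrix} = \begin{bmatrix} -X^\stella\\ I\end{bmatrix} \alpha(z),\qquad \alpha(z)=A-BX+z(D^\stella-B^\stella X).
\]
Since $\varphi(z)$ is regular and the columns of $\left[\begin{smallmatrix} I\\X\end{smallmatrix}\right]$ span a deflating subspace associated with the eigenvalues of $\alpha(z)$, the eigenvalues of $\alpha(z)$ form exactly the reciprocal-free set of eigenvalues $\{\lambda_1,\dots,\lambda_n\}$ attached to $X$. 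First I would observe that $A-BX$ is singular precisely when $0$ is an eigenvalue of the pencil $\alpha(z)$, and $D^\stella-B^\stella X$ is singular precisely when $\infty$ is an eigenvalue of $\alpha(z)$ (using the remarks on pencils in Section~\ref{sec:prel}, noting that $\alpha(z)$ is regular because $\varphi(z)$ is). If both matrices were singular, then $\{0,\infty\}$ would be contained in the eigenvalue set associated with $X$; but $0$ and $1/0=\infty$ are reciprocal to each other, contradicting reciprocal-freeness. Hence at least one of $A-BX$, $D^\stella-B^\stella X$ is nonsingular, which is the first claim.

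For the second claim, suppose $M$ is nonsingular and let $X$ be any solution to \eqref{eq:starnare} (no reciprocal-free hypothesis assumed). From \eqref{eq:1}, namely $M\left[\begin{smallmatrix} I\\X\end{smallmatrix}\right]=\left[\begin{smallmatrix} -X^\stella\\ I\end{smallmatrix}\right](A-BX)$, the matrix on the left has full column rank $n$ because $M$ is invertible and $\left[\begin{smallmatrix} I\\X\end{smallmatrix}\right]$ has full column rank; therefore the right-hand side has full column rank, which forces $A-BX$ to be nonsingular. Symmetrically, \eqref{eq:2}, namely $M^\stella\left[\begin{smallmatrix} I\\X\end{smallmatrix}\right]=\left[\begin{smallmatrix} -X^\stella\\ I\end{smallmatrix}\right](D^\stella-B^\stella X)$, together with the invertibility of $M^\stella$ (equivalent to that of $M$), forces $D^\stella-B^\stella X$ to be nonsingular.

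The routine parts are the rank/determinant bookkeeping; the only point that needs a little care is making the identification ``$0$ is an eigenvalue of $\alpha(z)$ iff $A-BX$ is singular'' and the analogous statement at $\infty$ fully rigorous, since this uses that $\alpha(z)$ is itself a regular pencil. Regularity of $\alpha(z)$ follows from \eqref{eq:defl}: if $\alpha(z)$ were singular, there would be a nonzero polynomial vector $v(z)$ with $\alpha(z)v(z)\equiv 0$, whence $\varphi(z)\left[\begin{smallmatrix} I\\X\end{smallmatrix}\right]v(z)\equiv 0$ with $\left[\begin{smallmatrix} I\\X\end{smallmatrix}\right]v(z)\not\equiv 0$, contradicting regularity of $\varphi(z)$. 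I expect the main (though mild) obstacle to be precisely this need to verify that the eigenvalue count for $\alpha(z)$ behaves as expected, i.e., that a singular $A-BX$ genuinely contributes the eigenvalue $0$ rather than being absorbed into a drop of degree; once regularity of $\alpha(z)$ is in hand this is immediate from the definitions recalled in Section~\ref{sec:prel}.
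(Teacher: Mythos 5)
Your argument is correct. For the first claim you follow essentially the paper's own route: by Theorem~\ref{thm:main} the solution $X$ is associated with the eigenvalues of $\alpha(z)$, $A-BX$ singular corresponds to the eigenvalue $0$ and $D^\stella-B^\stella X$ singular to the eigenvalue $\infty$, and since $0$ and $\infty$ are reciprocals they cannot both belong to a reciprocal-free set; your extra care in checking that $\alpha(z)$ is itself regular (so that the $0$/$\infty$ characterization from Section~\ref{sec:prel} applies) is a detail the paper leaves implicit, and your argument for it is sound. For the second claim you deviate from the paper: the paper argues that $M$ nonsingular excludes $0$ and $\infty$ from the spectrum of $\varphi(z)$, hence from that of $\alpha(z)$, whereas you read the conclusion directly off \eqref{eq:1} and \eqref{eq:2} by a rank count, since $M\left[\begin{smallmatrix} I\\X\end{smallmatrix}\right]$ has full column rank and equals $\left[\begin{smallmatrix} -X^\stella\\ I\end{smallmatrix}\right](A-BX)$, forcing $A-BX$ (and symmetrically $D^\stella-B^\stella X$) to be invertible. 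Your version is slightly more elementary: it bypasses the pencil-eigenvalue machinery and the implicit use of the inclusion of the spectrum of $\alpha(z)$ in that of $\varphi(z)$, and it makes transparent that the second statement needs no reciprocal-freeness hypothesis; the paper's version, on the other hand, keeps both halves of the proposition within the same spectral picture. Either way the result follows, and there is no gap.
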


\begin{proof}
{}From Theorem~\ref{thm:main} the solution $X$ is associated with the eigenvalues 
of the pencil
$\alpha(z)$ in \eqref{eq:alpha}.
Since the eigenvalues of the pencil $\alpha(z)$ constitute a reciprocal-free set by hypothesis,
then $0$ and $\infty$ cannot be both eigenvalues, therefore one between $A-BX$ and $D^\stella-B^\stella X$ is nonsingular.
Concerning the second part, 
assume that $M$ is nonsingular, then $0$ and $\infty$ cannot be eigenvalues of $\varphi(z)$ and thus of $\alpha(z)$, this show that both $A-BX$ and $D^\stella-B^\stella X$ are nonsingular.
\end{proof}

Under the assumption that $\phi(z)$ has no eigenvalues on the unit circle, the solution $X$ associated with the eigenvalues of $\phi(z)$ lying inside (or outside) the unit circle is unique and real, according to the 
following result.

\begin{theorem}\label{thm:uniqe}
Let $\phi(z)$ and $\alpha(z)$ be as in \eqref{eq:phi} and \eqref{eq:alpha}, respectively. If $\phi(z)$ is regular with no eigenvalues on the unit circle and there exists an $n$-dimensional graph deflating subspace $\left[\begin{smallmatrix} I\\X\end{smallmatrix}\right]$ of $\phi(z)$ corresponding to eigenvalues lying inside the open (outside the closed) unit disk, then $X$ is the unique solution to \eqref{eq:starnare} such that the eigenvalues of $\alpha(z)$ are inside the open (outside the closed) unit disk.  Moreover, $X$ is a real matrix.
\end{theorem}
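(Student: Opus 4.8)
The plan is to combine the spectral structure of the $\stella$-palindromic pencil $\phi(z)$ with the correspondence established in Theorem~\ref{thm:main}. First I would record the structural fact that a regular $\stella$-palindromic pencil with no unimodular eigenvalues has exactly $n$ eigenvalues inside the open unit disk and exactly $n$ outside the closed unit disk. Indeed, from $\phi(z)^\stella=z\phi(1/z)$ one gets $\det\phi(z)=z^{2n}\det\phi(1/z)$, so the zeros of $\det\phi(z)$, together with the eigenvalue $\infty$ (of multiplicity $2n-\deg\det\phi$), are paired as $\{\lambda,1/\lambda\}$ with equal multiplicities; since no eigenvalue has modulus $1$, each such pair contributes exactly one eigenvalue to the open disk and one to the exterior of the closed disk.

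For the existence part, I would note that the set of eigenvalues of $\phi(z)$ inside the open unit disk is reciprocal-free: if $|\lambda|<1$ and $\lambda\ne 0$ then $|1/\lambda|>1$, while if $\lambda=0$ then $1/\lambda=\infty$ lies outside. Applying the converse direction of Theorem~\ref{thm:main} to the assumed graph deflating subspace $\left[\begin{smallmatrix} I\\X\end{smallmatrix}\right]$ then shows that $X$ solves \eqref{eq:starnare}, and the direct direction identifies the eigenvalues of $\alpha(z)$ in \eqref{eq:alpha} with the eigenvalues of $\phi(z)$ associated with that deflating subspace, hence with numbers inside the open unit disk.

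For uniqueness, I would let $Y$ be any solution of \eqref{eq:starnare} whose associated pencil $A-BY+z(D^\stella-B^\stella Y)$ has all its eigenvalues inside the open unit disk. By the direct direction of Theorem~\ref{thm:main}, the columns of $\left[\begin{smallmatrix} I\\Y\end{smallmatrix}\right]$ span an $n$-dimensional deflating subspace of $\phi(z)$ whose $n$ associated eigenvalues lie inside the open disk; by the counting above these are \emph{all} the eigenvalues of $\phi(z)$ in the open disk. Since the deflating subspace of a regular pencil associated with a spectral set that is disjoint from its complement is unique (see, e.g., \cite{glr}), $\left[\begin{smallmatrix} I\\Y\end{smallmatrix}\right]$ and $\left[\begin{smallmatrix} I\\X\end{smallmatrix}\right]$ span the same subspace; equating the leading $n\times n$ blocks forces $Y=X$.

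Finally, I would obtain reality by conjugation: since $A,B,C,D$ are real, conjugating \eqref{eq:starnare} shows that $\overline X$ is again a solution, and its associated pencil equals $\overline{A-BX}+z\,\overline{D^\stella-B^\stella X}$, whose eigenvalues are the complex conjugates of those of $\alpha(z)$ and therefore still lie inside the open unit disk; the uniqueness just proved yields $\overline X=X$. The case of eigenvalues outside the closed unit disk is handled identically, since $\{\lambda:|\lambda|>1\}$ is reciprocal-free and accounts for the remaining $n$ eigenvalues of $\phi(z)$. The only points demanding care are the multiplicity bookkeeping in the palindromic pairing and the uniqueness of the spectral deflating subspace, both of which are classical; the rest is a direct application of Theorem~\ref{thm:main}.
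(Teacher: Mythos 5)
Your proposal is correct and follows essentially the same route as the paper: the palindromic pairing together with the absence of unimodular eigenvalues yields exactly $n$ eigenvalues inside and $n$ outside the unit circle, the uniqueness of the deflating subspace associated with a spectral set disjoint from its complement gives uniqueness of $X$, and Theorem~\ref{thm:main} supplies the correspondence between that subspace and solutions of \eqref{eq:starnare}. The only cosmetic difference is the reality step: you conjugate the equation and invoke the uniqueness just established, whereas the paper notes that the eigenvalues inside (outside) the disk form a self-conjugate set, so the corresponding deflating subspace admits a real basis and hence $X$ is real.
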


\begin{proof}
Since $\phi(z)$ is a palindromic pencil, its eigenvalues come in pairs $(\lambda,1/\lambda)$, and since $\det\phi(z)\ne 0$ if $|z|=1$, then $\phi(z)$ has $n$ eigenvalues inside the open unit disk and $n$ eigenvalues outside the closed unit disk. Since the $n$ eigenvalues inside the open (outside the closed) unit disk are disjoint from the remaining $n$, then the deflating subspace corresponding to the $n$ eigenvalues 
inside the open (outside the closed) unit disk  
 is unique \cite{bb06}. The thesis follows from Theorem~\ref{thm:main} and from the property that complex conjugate eigenvalues have  the same modulus.
\end{proof}

By borrowing an idea from the study of nonsymmetric algebraic Riccati equations \cite{bim:book}, we
consider the dual equation
\begin{equation}\label{eq:dual}
Y^\stella D+AY-B+Y^\stella C Y=0,
\end{equation}
that is obtained by exchanging the role of $A$ and $D$, and the role of $B$ and $C$ in \eqref{eq:starnare}.
By following the same arguments used in Theorem~\ref{thm:main} for equation \eqref{eq:starnare}, we find that, if $Y$ is a solution to \eqref{eq:dual}, then
\begin{equation}\label{eq:tinv}
\phi(z)\left[\begin{array}{c}
Y \\
I
\end{array}\right]=  \left[\begin{array}{c}
I \\
-Y^\stella
\end{array}\right]\beta(z) ,~~~\beta(z)=D+CY+z(A^\stella + C^\stella Y).
\end{equation}
We say that $Y$ is 
associated with the eigenvalues of $\beta(z)$.

Therefore, if $X$ and $Y$ are solutions to \eqref{eq:starnare} and \eqref{eq:dual}, respectively, then
 \[
\phi(z)\left[\begin{array}{cc}
I & Y \\
X & I
\end{array}\right]=
\left[\begin{array}{cc}
-X^\stella & I \\
I & -Y^\stella
\end{array}\right]
\left[\begin{array}{cc}
\alpha(z) & 0 \\
 0 & \beta(z)
\end{array}\right].
\]
Hence, if $X$ and $Y$ are such that $I-XY$ is invertible (that is equivalent to require that $\left[\begin{smallmatrix} I & Y\\X & I\end{smallmatrix}\right]$ is invertible), then we derive the following result, that provides a block diagonalization of the pencil $\phi(z)$.

\begin{theorem}
Let $X$ and $Y$ be solutions to equations \eqref{eq:starnare} and \eqref{eq:dual}, respectively, such that $\det(I-XY)\ne 0$. Then
\[
\phi(z)=
\left[\begin{array}{cc}
-X^\stella & I \\
I & -Y^\stella
\end{array}\right]
\left[\begin{array}{cc}
\alpha(z) & 0 \\
 0 & \beta(z)
\end{array}\right]
\left[\begin{array}{cc}
I & Y \\
X & I
\end{array}\right]^{-1},
\]
where $\alpha(z)$ and $\beta(z)$ are defined in \eqref{eq:alpha} and \eqref{eq:tinv}, respectively.
\end{theorem}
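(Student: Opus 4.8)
The plan is to assemble the desired factorization from the two one-sided relations already established in the excerpt, namely \eqref{eq:defl} for the primal solution $X$ and \eqref{eq:tinv} for the dual solution $Y$. First I would stack these two relations into a single block identity: since
\[
\phi(z)\left[\begin{array}{cc} I & Y \\ X & I\end{array}\right]
=\left[\begin{array}{cc} -X^\stella & I \\ I & -Y^\stella\end{array}\right]
\left[\begin{array}{cc} \alpha(z) & 0 \\ 0 & \beta(z)\end{array}\right],
\]
which is exactly the block identity displayed just before the theorem statement, it only remains to move the right-hand $2n\times 2n$ block matrix $\left[\begin{smallmatrix} I & Y \\ X & I\end{smallmatrix}\right]$ to the other side by inverting it.

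The key step is therefore the invertibility of $\left[\begin{smallmatrix} I & Y \\ X & I\end{smallmatrix}\right]$, which is where the hypothesis $\det(I-XY)\ne 0$ enters. I would justify this by the standard Schur-complement determinant formula: the leading block $I$ is invertible, so $\det\left[\begin{smallmatrix} I & Y \\ X & I\end{smallmatrix}\right]=\det(I)\det(I-XI^{-1}Y)=\det(I-XY)$, which is nonzero by assumption. (Equivalently one can use the other Schur complement $I-YX$; note $\det(I-XY)=\det(I-YX)$, so the choice is immaterial.) Hence the block matrix is invertible, and right-multiplying the displayed block identity by its inverse yields precisely the claimed formula
\[
\phi(z)=\left[\begin{array}{cc} -X^\stella & I \\ I & -Y^\stella\end{array}\right]
\left[\begin{array}{cc} \alpha(z) & 0 \\ 0 & \beta(z)\end{array}\right]
\left[\begin{array}{cc} I & Y \\ X & I\end{array}\right]^{-1}.
\]

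There is essentially no obstacle here: the theorem is a corollary of the computations carried out in the paragraphs immediately preceding it, and the only genuinely new content is the observation that $\det(I-XY)\ne 0$ is equivalent to invertibility of the outer block matrix (already parenthetically remarked in the text). If I wanted to be slightly more self-contained I would re-derive the block identity from scratch by writing $\phi(z)\left[\begin{smallmatrix} I \\ X\end{smallmatrix}\right]$ and $\phi(z)\left[\begin{smallmatrix} Y \\ I\end{smallmatrix}\right]$ using \eqref{eq:1}, \eqref{eq:2}, and the analogous pair of relations for the dual equation, but since \eqref{eq:defl} and \eqref{eq:tinv} are stated above I would simply cite them. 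The main thing to be careful about is bookkeeping of which pencil ($\alpha$ or $\beta$) sits in which diagonal block and that the left factor has the blocks $-X^\stella, I; I, -Y^\stella$ in the correct positions — a transcription issue rather than a mathematical one.
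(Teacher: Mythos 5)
Your proposal is correct and follows exactly the paper's (implicit) argument: the theorem is stated as an immediate consequence of the block identity displayed just before it, with the hypothesis $\det(I-XY)\ne 0$ supplying invertibility of $\left[\begin{smallmatrix} I & Y\\ X & I\end{smallmatrix}\right]$ via the Schur complement so that it can be moved to the right-hand side.
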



We conclude with a result that will be useful to show the convergence of doubling algorithms for computing the solution associated with the eigenvalues inside/outside the unit circle.

\begin{theorem}\label{thm:SDA0}
Assume that $\varphi(z)$ is regular, with $\det \varphi(z)\ne 0$ when $|z|=1$, and that $\phi(z)$ has two deflating subspaces spanned by the columns of $\left[\begin{smallmatrix} I\\X\end{smallmatrix}\right]$ and $\left[\begin{smallmatrix} Y\\I\end{smallmatrix}\right]$, that in turn are associated with the eigenvalues inside and outside the disk, respectively.  Then $X$ and $Y$ solve \eqref{eq:starnare} and \eqref{eq:dual}, respectively,
the matrices $D^T-B^TX$ and $D-CY$ are invertible, and 
\begin{equation}\label{eq:de}
	M\begin{bmatrix} I\\X\end{bmatrix}
	=M^T\begin{bmatrix} I\\X\end{bmatrix} W,~~
	M\begin{bmatrix} Y\\I\end{bmatrix}V= M^T\begin{bmatrix} Y\\I\end{bmatrix},
\end{equation}
where $W=(D^T-B^TX)^{-1}(A-BX)$, $V=(D+CY)^{-1}(A^T+C^TY)$. Moreover, $\rho(W)=\rho(V)<1$.

\end{theorem}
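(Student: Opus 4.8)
The plan is to assemble the statement from the results already proved, treating the three claims (that $X,Y$ solve the equations; that the two matrices are invertible; and the eigenvalue relations together with $\rho(W)=\rho(V)<1$) in turn. First I would deal with the fact that $X$ and $Y$ solve \eqref{eq:starnare} and \eqref{eq:dual}. The subspace $\left[\begin{smallmatrix} I\\X\end{smallmatrix}\right]$ is a deflating subspace of $\phi(z)$ associated with the $n$ eigenvalues inside the open unit disk; since $\phi(z)$ is palindromic with no unit-circle eigenvalues, the eigenvalues come in pairs $(\lambda,1/\lambda)$, so the $n$ eigenvalues strictly inside the disk form a reciprocal-free set. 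Hence Theorem~\ref{thm:main} applies and $X$ solves \eqref{eq:starnare}, and it is associated with the eigenvalues of $\alpha(z)$ in \eqref{eq:alpha}, which are exactly those $n$ eigenvalues. The same argument applied to the dual equation (using the analogue \eqref{eq:tinv} of Theorem~\ref{thm:main}) shows $Y$ solves \eqref{eq:dual} and is associated with the eigenvalues of $\beta(z)$, i.e.\ the $n$ eigenvalues outside the closed disk.

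Next I would establish the invertibility of $D^T-B^TX$ and $D-CY$. Since $\alpha(z)$ has all its eigenvalues strictly inside the unit disk, in particular $\infty$ is not an eigenvalue of $\alpha(z)$, which by \eqref{eq:alpha} means precisely that the leading coefficient $D^T-B^TX$ is nonsingular; this is the same reasoning as in the Proposition following Corollary~\ref{thm:dare}. Symmetrically, $\beta(z)=D+CY+z(A^T+C^TY)$ has all eigenvalues outside the closed disk, so $0$ is not an eigenvalue of $\beta(z)$, forcing the constant coefficient $D+CY$ to be nonsingular. Then $W=(D^T-B^TX)^{-1}(A-BX)$ and $V=(D+CY)^{-1}(A^T+C^TY)$ are well defined. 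For the identities \eqref{eq:de}: from \eqref{eq:defl}--\eqref{eq:alpha} we have $M\left[\begin{smallmatrix} I\\X\end{smallmatrix}\right]=\left[\begin{smallmatrix} -X^T\\I\end{smallmatrix}\right](A-BX)$ and $M^T\left[\begin{smallmatrix} I\\X\end{smallmatrix}\right]=\left[\begin{smallmatrix} -X^T\\I\end{smallmatrix}\right](D^T-B^TX)$; eliminating $\left[\begin{smallmatrix} -X^T\\I\end{smallmatrix}\right]$ (which has full column rank) and inverting $D^T-B^TX$ gives $M\left[\begin{smallmatrix} I\\X\end{smallmatrix}\right]=M^T\left[\begin{smallmatrix} I\\X\end{smallmatrix}\right]W$. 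The second identity in \eqref{eq:de} follows the same way from \eqref{eq:tinv} after dividing \eqref{eq:tinv} by $z$ and relabelling, i.e.\ writing $\phi(z)\left[\begin{smallmatrix} Y\\I\end{smallmatrix}\right]=\left[\begin{smallmatrix} I\\-Y^T\end{smallmatrix}\right]\beta(z)$ and matching the $z$-coefficient and constant term.

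Finally, the eigenvalue statement $\rho(W)=\rho(V)<1$. The matrix $W$ is, up to the change of basis given by the columns of $\left[\begin{smallmatrix} -X^T\\I\end{smallmatrix}\right]$, the matrix representing the action of the pencil on the deflating subspace; concretely, $\alpha(z)=(D^T-B^TX)(zI+W)$ up to sign, so the eigenvalues of $W$ are $\{-\lambda : \lambda \text{ eigenvalue of }\alpha(z)\}$ — more carefully, from $\alpha(z)=(A-BX)+z(D^T-B^TX)=(D^T-B^TX)(W+zI)$ the finite eigenvalues of $\alpha(z)$ are the values $z_0$ with $\det(W+z_0 I)=0$, i.e.\ the eigenvalues of $-W$. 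Hence $\rho(W)$ equals the largest modulus among the eigenvalues of $\alpha(z)$, which is $<1$. Similarly the eigenvalues of $\beta(z)$ are the reciprocals of the eigenvalues of $V$: from $\beta(z)=(D+CY)(I+zV)$ one gets $\det\beta(z_0)=0$ iff $\det(I+z_0V)=0$ iff $1/(-z_0)$ is an eigenvalue of $V$, so the eigenvalues of $V$ are the reciprocals (up to sign) of those of $\beta(z)$. Since the eigenvalues of $\beta(z)$ lie outside the closed unit disk, their reciprocals lie strictly inside, so $\rho(V)<1$. The equality $\rho(W)=\rho(V)$ then comes from the palindromic pairing: the eigenvalues of $\beta(z)$ are exactly the reciprocals of the eigenvalues of $\alpha(z)$ (the two sets together exhaust the spectrum of $\phi(z)$, which is closed under $\lambda\mapsto 1/\lambda$, and they are the ``outside'' and ``inside'' halves), so $\rho(V)=\max|1/\mu|$ over $\mu$ eigenvalue of $\beta(z)$ $=\max|\lambda|$ over $\lambda$ eigenvalue of $\alpha(z)$ $=\rho(W)$.

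The step I expect to require the most care is the bookkeeping with signs and the infinite/zero eigenvalues when translating between the pencils $\alpha(z)$, $\beta(z)$ and the constant matrices $W$, $V$ — in particular making sure that the factorizations $\alpha(z)=(D^T-B^TX)(W+zI)$ and $\beta(z)=(D+CY)(I+zV)$ are legitimate (which is exactly where the invertibility of $D^T-B^TX$ and $D+CY$ is used) and that no eigenvalue is lost at $0$ or $\infty$. Once those identifications are clean, the spectral-radius claims are immediate from the location of the eigenvalues of $\phi(z)$ and the palindromic symmetry. Everything else is a direct quotation of Theorem~\ref{thm:main}, its dual version \eqref{eq:tinv}, and the Proposition on nonsingularity.
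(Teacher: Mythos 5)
Your proposal is correct and follows essentially the same route as the paper's proof: invoke the converse part of Theorem~\ref{thm:main} (and its dual analogue via \eqref{eq:tinv}) using that the inside/outside eigenvalue sets are reciprocal-free, deduce invertibility of the relevant coefficient from the absence of an eigenvalue at $\infty$ (resp.\ $0$), obtain \eqref{eq:de} by matching the constant and $z$-coefficients, and get $\rho(W)=\rho(V)<1$ from the palindromic reciprocal pairing of the spectrum. Your extra care with the sign conventions in the factorizations $\alpha(z)=(D^\stella-B^\stella X)(W+zI)$, $\beta(z)=(D+CY)(I+zV)$, and your reading of the invertibility claim as concerning $D+CY$ (consistent with the definition of $V$), only make explicit what the paper leaves implicit.
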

\begin{proof}
From the second part of Theorem~\ref{thm:main}, it follows that $X$ is a solution to~\eqref{eq:starnare}. Hence, from the first part of Theorem~\ref{thm:main}, equation \eqref{eq:defl} is verified.
 Under our assumptions,  the eigenvalues of $\alpha(z)$ lie inside the unit disk. This implies that $D^T-B^TX$ is invertible since, if it were singular, then $\alpha(z)$ in \eqref{eq:alpha} would have an eigenvalue at $\infty$. By equating the terms in $z$ in \eqref{eq:defl}, we obtain
\[
\begin{bmatrix} -X^T\\I \end{bmatrix}=
M^T \begin{bmatrix} I \\X \end{bmatrix}W
\]
where $W=(D^T-B^TX)^{-1}(A-BX)$. By plugging this expression in \eqref{eq:defl} and by comparing the constant terms, we obtain
\[
	M\begin{bmatrix} I\\X\end{bmatrix}
	=M^T\begin{bmatrix} I\\X\end{bmatrix} W.
\]
Since the eigenvalues of $W$ coincide with the eigenvalues of $\alpha(z)$, we have that {$\rho(W)<1$}. Similarly, by using \eqref{eq:tinv}, we proceed for the deflating subspace
$\left[\begin{smallmatrix} Y\\I\end{smallmatrix}\right]$.
The property $\rho(V)=\rho(W)$ holds since the eigenvalues of $V$ are the reciprocals of the eigenvalues of $\beta(z)$ in \eqref{eq:tinv}, that coincide with the eigenvalues of $\alpha(z)$.
\end{proof}

%
%
%
%
%
%
%

\section{A $T$-NARE with nonnegativity properties}\label{sec:posi}

Given a real matrix $A$, we write $A\ge 0$ ($A>0$) if all the entries of $A$ are nonnegative (positive), and we say that the matrix $A$ is nonnegative (positive). Moreover, we write $A\ge B$ if $A-B\ge 0$.

An usual assumption for the NARE \eqref{eq:nare}
is that the matrix
\begin{equation}\label{eq:hatM}
	\widehat M=\begin{bmatrix} A & -B\\ C & D\end{bmatrix},
\end{equation}
is an $M$-matrix, i.e., a matrix of the form $\widehat M=\sigma I-H$, where $H\ge 0$ and $\rho(H)\le \sigma$. Under mild assumptions on $\widehat M$, it can be proved that there exists a minimal solution of the NARE \eqref{eq:nare}. More precisely, if 
there exists a vector $v>0$ such that $\widehat Mv\ge 0$, then the NARE \eqref{eq:nare} has a minimal nonnegative solution \cite[Thm. 2]{guo12}, where the ordering is meant component-wise. This holds, in particular, when $\widehat M$ is nonsingular or singular irreducible.

Apparently, the case of the $\stella$-NARE \eqref{eq:starnare} where $\wh M$ is an $M$-matrix is much more complicated than the case of a NARE \eqref{eq:nare}, and has been treated in \cite{bp20}.
In particular, in \cite{bp20}, the existence of a minimal nonnegative solution is guaranteed under the hypothesis that the matrix 
\[
W=I\otimes D+(A^\stella \otimes I)\Pi
\] 
has a nonnegative inverse, where $\Pi=\sum_{i,j}e_ie_j^T\otimes e_ie_j^T$ ($e_k$ is the $k$-th vector of the canonical basis of $\mathbb{R}^n$)
and that there exists a matrix $\wt X$ such that
\begin{equation}\label{eq:seq0}
   	D\wt X+\wt X^\stella A-\wt X^\stella B\wt X+C\ge 0.
\end{equation}

We will show that under milder assumptions than those in \cite{bp20},
there exists a minimal nonnegative solution
$X_{\min}$, i.e., such that $X_{\min}\le X$ for any other possible nonnegative solution $X$.

For this purpose, define the sequence $\{ X_\ell\}_\ell$ by means of the recursion
\[
D X_{\ell+1}+X_{\ell+1}^TA=X_\ell^T B X_\ell-C,\quad\ell=0,1,\ldots,\quad X_0=0.
\]
Under suitable assumptions on the coefficients of the $\stella$-NARE, the sequence $\{ X_\ell\}_\ell$ is well defined and converges to $X_{\min}$, as stated in the following proposition.

\begin{proposition}
Assume that $\det (W)\ne 0$, $W^{-1}\ge 0$, $D^{-1}\ge 0$, $B\ge 0$, $C\le 0$, and that there exist  $u,v\in \mathbb{R}^n$ positive vectors such that
$Au-Bv\ge 0$ and $Cu+Dv\ge 0$.
Then there exists a minimal nonnegative solution $X_{\min}$ to \eqref{eq:starnare} and the sequence $\{X_\ell\}_\ell$ generated by \eqref{eq:seq0} is well defined and converges to $X_{\min}$, with $0\le X_\ell \le X_{\ell+1}$, and $X_\ell u\le v$ for any $\ell\ge 0$.
\end{proposition}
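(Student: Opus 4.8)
The plan is to mimic the classical monotone-convergence argument for NAREs with $M$-matrix structure, adapting it to the $T$-transpose setting. First I would vectorize the recursion in \eqref{eq:seq0}. Using the identity $\mathrm{vec}(X^T) = \Pi\,\mathrm{vec}(X)$, the linear system $DX_{\ell+1} + X_{\ell+1}^T A = X_\ell^T B X_\ell - C$ becomes $W\,\mathrm{vec}(X_{\ell+1}) = \mathrm{vec}(X_\ell^T B X_\ell - C)$, where $W = I\otimes D + (A^T\otimes I)\Pi$ is exactly the matrix in the hypothesis. Since $\det(W)\neq 0$, the sequence is well defined; since $W^{-1}\ge 0$ and $X_\ell^T B X_\ell - C \ge 0$ whenever $X_\ell \ge 0$ (because $B\ge 0$, $C\le 0$), an easy induction starting from $X_0 = 0$ gives $X_\ell \ge 0$ for all $\ell$.

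Next I would establish monotonicity $X_\ell \le X_{\ell+1}$ and the uniform bound $X_\ell u \le v$ simultaneously by induction. For monotonicity, subtract consecutive recursions: $W\,\mathrm{vec}(X_{\ell+1}-X_\ell) = \mathrm{vec}(X_\ell^T B X_\ell - X_{\ell-1}^T B X_{\ell-1})$, and the right-hand side is $\ge 0$ by the inductive hypothesis $X_{\ell-1}\le X_\ell$ together with $B\ge 0$ (writing the difference as $X_\ell^T B(X_\ell - X_{\ell-1}) + (X_\ell - X_{\ell-1})^T B X_{\ell-1}$), so $W^{-1}\ge 0$ gives $X_{\ell+1}\ge X_\ell$. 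For the bound, I would show $X_\ell u \le v$ by induction: assuming it holds at step $\ell$, multiply the recursion for $X_{\ell+1}$ by $u$ on the right; using $D^{-1}\ge 0$ (so $D$ is nonsingular) one can solve for $X_{\ell+1}$ and estimate. Concretely, from $DX_{\ell+1} = X_\ell^T B X_\ell - C - X_{\ell+1}^T A$, the presence of the $X_{\ell+1}^T A$ term is awkward, so instead I would work directly with the vectorized form: $\mathrm{vec}(X_{\ell+1}) = W^{-1}\mathrm{vec}(X_\ell^T B X_\ell - C)$, and use the positivity hypotheses $Au - Bv\ge 0$, $Cu + Dv\ge 0$ to produce a supersolution. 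The natural candidate is to check that the constant sequence equal to a suitable matrix $\bar X$ with $\bar X u \le v$ is a supersolution of the iteration, i.e.\ $W\,\mathrm{vec}(\bar X)\ge \mathrm{vec}(\bar X^T B\bar X - C)$, and then conclude $X_\ell \le \bar X$ for all $\ell$ by comparison; but since no such $\bar X$ is handed to us, the cleaner route is to carry the vector inequality $X_\ell u\le v$ through the induction directly, exploiting that $W(\text{something involving }v) \ge \text{RHS}\cdot u$ follows from $Au-Bv\ge0$ and $Cu+Dv\ge0$.

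Once $\{X_\ell\}$ is shown to be nondecreasing and bounded above (entrywise, via $X_\ell u\le v$ with $u,v>0$, which forces each entry of $X_\ell$ to be bounded), it converges to some $X_{\min}\ge 0$; passing to the limit in the recursion (continuity) shows $DX_{\min} + X_{\min}^T A - X_{\min}^T B X_{\min} + C = 0$, so $X_{\min}$ solves \eqref{eq:starnare}. Minimality follows by a standard argument: if $X\ge 0$ is any nonnegative solution, then $X$ satisfies $DX + X^T A = X^T B X - C$, and an induction identical in form to the monotonicity proof — comparing $X_\ell$ with $X$ via $W\,\mathrm{vec}(X - X_\ell) = \mathrm{vec}(X^T B X - X_{\ell-1}^T B X_{\ell-1})\ge 0$ once $X_{\ell-1}\le X$ — yields $X_\ell \le X$ for all $\ell$, hence $X_{\min}\le X$ in the limit.

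The main obstacle I anticipate is the bound step $X_\ell u \le v$: unlike the monotonicity step, it genuinely requires combining the two sign conditions $Au - Bv\ge 0$ and $Cu + Dv\ge 0$ through the action of $W^{-1}$, and one must be careful because $W$ mixes $D$ (acting on the left) with $A^T$ (acting on the right, via $\Pi$). The trick is presumably to recognize that $W\,\mathrm{vec}(Z)$ for a matrix $Z$ equals $\mathrm{vec}(DZ + Z^T A)$, so the desired inequality $X_{\ell+1}u\le v$ should be read off from a matrix inequality of the form $D(\cdot) + (\cdot)^T A \ge X_\ell^T B X_\ell - C$ evaluated against $u$, reducing everything to the given hypotheses on $u,v$. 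I would also need to double-check that $D^{-1}\ge 0$ is used where claimed — likely it is what guarantees $W$ has the right sign pattern in the first place, or it enters the base case — and make sure the reciprocal-free / regularity hypotheses from the previous sections are not secretly needed here (they should not be, since this proposition is purely about the positive structure).
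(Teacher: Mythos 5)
Your overall plan (vectorize the recursion, get well-definedness from $\det W\neq0$, nonnegativity and monotonicity from $W^{-1}\ge0$, $B\ge0$, $C\le0$, then convergence from monotone boundedness and minimality by comparison with any nonnegative solution) coincides with the paper's argument. However, the one step you yourself flag as the main obstacle --- the uniform bound $X_{\ell}u\le v$ --- is left genuinely unproved, and the route you lean toward does not work: comparing through $W^{-1}\ge0$ requires a supersolution $\bar X$ with $W\,\mathrm{vec}(\bar X)\ge\mathrm{vec}(X_\ell^T B X_\ell-C)$ and $\bar X u\le v$, and, as you concede, no such $\bar X$ is available from the hypotheses (which only give the vector inequalities $Au-Bv\ge0$ and $Cu+Dv\ge0$). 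Your closing remark that the inequality ``should be read off'' from $W\,\mathrm{vec}(Z)=\mathrm{vec}(DZ+Z^TA)$ is not an argument either, because the term $(\cdot)^T A$ acting on $u$ has no sign control by itself.

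The paper resolves this precisely via the ``awkward'' direct route you discarded. Within the same induction step, after having already shown $0\le X_\ell\le X_{\ell+1}$, one writes $X_{\ell+1}=D^{-1}\bigl(X_\ell^T B X_\ell-C-X_{\ell+1}^T A\bigr)$ and multiplies on the right by $u$. Then $Au-Bv\ge0$, $X_{\ell+1}\ge0$ and $D^{-1}\ge0$ give $-D^{-1}X_{\ell+1}^T A u\le -D^{-1}X_{\ell+1}^T B v$; $Cu+Dv\ge0$ and $D^{-1}\ge0$ give $-D^{-1}Cu\le v$; and $X_\ell u\le v$, $B\ge0$, $X_\ell\ge0$ give $X_\ell^T B X_\ell u\le X_\ell^T B v$. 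Combining,
\begin{equation*}
X_{\ell+1}u\;\le\; v+D^{-1}\bigl(X_\ell^T-X_{\ell+1}^T\bigr)Bv\;\le\; v,
\end{equation*}
where the last inequality uses $X_\ell\le X_{\ell+1}$, $B\ge0$, $v>0$, $D^{-1}\ge0$. This is exactly where $D^{-1}\ge0$ enters (it is not needed elsewhere), and it shows why the monotonicity part of the induction must be established before the bound within each step. With this step supplied, the remainder of your plan (boundedness of the entries from $X_\ell\ge0$, $X_\ell u\le v$, $u,v>0$; passage to the limit; and minimality via $X_\ell\le Y$ for any nonnegative solution $Y$) is the same as the paper's.
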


\begin{proof}
We show by induction on $\ell$ that $0\le X_\ell \le X_{\ell+1}$ and $X_\ell u\le v$. If $\ell=0$, then $X_0u=0\le v$ and $X_1$ solves the equation  $DX_{1}+X_{1}^TA=-C$, i.e.,
$\mathrm{vec}(X_1)=-W^{-1}\mathrm{vec}(C)$. Since $W^{-1}\ge 0$ and $C\le 0$, then $0=X_0\le X_1$. Under the assumption $0\le X_\ell \le X_{\ell+1}$, $X_\ell u\le v$, we show that $0\le X_{\ell+1}\le X_{\ell+2}$.
Since $W^{-1}\ge 0$, $B\ge 0$ and $C\le 0$, we have
\[
\mathrm{vec}(X_{\ell+2})=W^{-1}\mathrm{vec}(X_{\ell+1}^TBX_{\ell+1}-C)\ge W^{-1}\mathrm{vec}(X_{\ell}^TBX_{\ell}-C)=\mathrm{vec}(X_{\ell+1})\ge 0.
\]
Now we show that $X_{\ell+1}u\le v$.
The matrix $X_{\ell+1}$ is such that
\[
X_{\ell+1}=D^{-1}(X_\ell^TBX_\ell-C-X_{\ell+1}^TA).
\]
Multiplying to the right by $u$ yields
\begin{equation}\label{eq:yu}
X_{\ell+1}u=D^{-1}(X_\ell^TBX_\ell-C-X_{\ell+1}^TA)u.
\end{equation}
From the inequality $Au-Bv\ge 0$, since $D^{-1}\ge 0$ and $X_{\ell+1}\ge 0$, we deduce that $-D^{-1}X_{\ell+1}^TAu\le -D^{-1}X_{\ell+1}^TBv$. Moreover, from the inequality $Cu+Dv\ge 0$, since $D^{-1}\ge 0$, we deduce that $-D^{-1}Cu\le v$.
Hence, since $B\geq 0$ and $X_\ell u\le v$, from \eqref{eq:yu}, we derive
\[
X_{\ell+1}u\le v + D^{-1}(X_\ell^T-X_{\ell+1}^T)Bv\le v,
\]
where the latter inequality follows since $0\le X_\ell\le X_{\ell+1}$.
Therefore, the sequence $\{ X_\ell\}_\ell$ is monotonic nondecreasing and bounded from above, since $X_\ell\ge 0$ and $X_\ell u\le v$, with $u,v>0$. Therefore there exists $X=\lim_\ell X_\ell$.
Such a matrix $X$ solves \eqref{eq:starnare} by continuity and is a nonnegative solution. We show that it is the minimal nonnegative solution. Assume that $Y\ge 0$ is a solution to \eqref{eq:starnare}. We may easily show by induction on $\ell$ that $X_\ell\le Y$ for any $\ell\ge 0$. Therefore the inequality holds also in the limit and $X_{\min}:=X$ is the minimal nonnegative solution. 
\end{proof}

In the $M$-matrix case, it  might be useful to consider the pencil
\[
	\widehat \phi(z)=\begin{bmatrix} A & -B\\C & D\end{bmatrix}
	+ z 	\begin{bmatrix} D^\top & -B^\top\\C^\top & A^\top\end{bmatrix},
\]
that has an interesting structure. Indeed, when $\widehat M$ in~\eqref{eq:hatM} is an $M$-matrix, then the two matrices in the pencil are $M$-matrices as well. 
For the pencil $\widehat\phi(z)$  we may formulate a result analogue to Theorem~\ref{thm:main}, that relates 
any solution of the $\stella$-NARE with a graph deflating subspace of the pencil $\widehat\phi(z)$.

%
%
%
%
%
%

\section{Algorithms based on the linearization}\label{sec:algs}

The equivalence of computing a graph deflating subspace of a palindromic pencil and a solution of a $\stella$-NARE allows one to devise new algorithms for the solution of the latter. In particular, we consider three algorithms for our problem:
\begin{itemize}
\item The QZ algorithm \cite[Chapter 7]{gvl};
\item The DA algorithm \cite[Chapter 5]{bim:book};
\item The palindromic QZ algorithm \cite{m4,ksw09}.
\end{itemize}
The first two are general algorithms that provide certain invariant subspaces of pencils, while the third one is specialized to the structure of the problem.

\subsection{The QZ algorithm} 
Given $A',B'\in\mathbb C^{n\times n}$, the QZ algorithm provides two unitary matrices $Q,Z$ such that $A'':=Q^*A'Z$ and $B'':=Q^*B'Z$ are upper triangular, where the superscript ``$*$'' denotes complex conjugate transposition. The pencil $A'+zB'$ is similar to the pencil $A''+zB''$, so if the former is regular, then its eigenvalues can be read from the diagonal of the latter, as the solutions of the equation $(A'')_{ii}-z (B'')_{ii}=0$, when $(B'')_{ii}\ne 0$, or $\infty$ when $(B'')_{ii}=0$, for $i=1,\ldots,n$ (the case $(A'')_{ii}=(B'')_{ii}=0$ cannot hold for a regular pencil).

One can also get deflating subspaces from the matrix $Z$. Indeed, for $\ell=1,\ldots,n$, the first $\ell$ columns of $Z$ span a right deflating subspace of the pencil $A'+zB'$, corresponding to the eigenvalues related to the first $\ell$ diagonal entries of $A''+zB''$ from the top left corner.

There exists a variant of the QZ algorithm for real matrices $A'$ and $B'$ such that $Q$ and $Z$ are real orthogonal and $A''+zB''$ is an upper quasi-triangular pencil, that is $A''+zB''$ is block upper triangular with diagonal blocks of size $1$, corresponding to real eigenvalues, or $2$, corresponding to pairs of complex conjugate eigenvalues. Also in the real case, the first $\ell$ columns of $Z$ yield a real deflating subspace corresponding to the first $\ell$ eigenvalues related to the diagonal blocks of $A''+zB''$, with the only constraint on $\ell$ that a $2\times 2$ diagonal block cannot be split. In particular, if the sizes of the diagonal blocks are $\nu_1,\ldots,\nu_h$, then the first $\sum_{i=1}^{j} \nu_i$ columns of $Z$ provide a real deflating subspace for $j=1,\ldots,h$.

In our problem,  $\varphi(z)=M+zM^T$ is a real pencil of size $2n$. If $\det\phi(z)\ne 0$ on the unit circle, one may want a deflating subspace corresponding to the $n$ eigenvalues inside/outside the unit disk. This can be obtained by relying on the reordered real QZ algorithm that produces two upper quasi-triangular matrices $M':=Q^TMZ$ and $M'':=Q^TM^TZ$, such that the  $n$ eigenvalues of the leading $n\times n$ principal submatrix of  $M'+zM''$ are the ones inside the unit disk. The span of the first $n$ columns of $Z$ is the required deflating subspace.

In principle, one can compute a deflating subspace corresponding to any subset of the spectrum, provided this is reciprocal-free, to get a solution to~\eqref{eq:starnare}, in view of Theorem~\ref{thm:main}.
The general procedure is summarized in Algorithm~\ref{alg:QZ}.

More details on the QZ algorithm can be found in \cite[Chapter 7]{gvl}.

%
%
\begin{algorithm}[t]
  \DontPrintSemicolon{}
  \SetKwInOut{Input}{input}\SetKwInOut{Output}{output}
  \Input{$M\in\mathbb{R}^{2n\times 2n}$ as in~\eqref{eq:phi} such that $\phi(z)=M+zM^{\stella}$ is regular and has an $n$-dimensional graph deflating subspace corresponding to a set $\Lambda$ of $n$ reciprocal free eigenvalues of $\phi(z)$.}
  \Output{An approximation $X$ to the solution of~\eqref{eq:starnare}, corresponding to the eigenvalues in $\Lambda$.}
  \BlankLine{}
  Compute unitary matrices $Q$ and $Z$ such that $M'=Q^{*}MZ$ and $M''=Q^{*}M^{\stella}Z$ are upper triangular \;
  
  If needed, apply a reordering procedure on the diagonal of $M'+zM''$, getting $\wt Z$ such that the first $n$ columns of $\wt Z$ span the deflating subspace corresponding to the eigenvalues in the set $\Lambda$; otherwise, set $\wt Z=Z$\;
  
  If $\wt Z=\begin{bmatrix}
   \wt Z_{11} & \wt Z_{12}\\
   \wt Z_{21} & \wt Z_{22}\\                                                                                                   \end{bmatrix}$ is such that $\wt Z_{11}$ is nonsigular, set $X=\wt Z_{21}\wt Z_{11}^{-1}$ \;
  \caption{QZ algorithm to solve~\eqref{eq:starnare}.\label{alg:QZ}}
\end{algorithm}


\subsection{The Doubling Algorithm} Given a pencil $\phi(z)=A'+zB'$, where $A',B'\in\mathbb C^{(m+n)\times (m+n)}$, with $m$ eigenvalues inside the unit disk and $n$ outside, under suitable assumptions, the (Structured) Doubling Algorithm (DA) algorithm allows one to find a deflating subspace corresponding to the eigenvalues inside and outside the unit disk.

For a complete description of doubling algorithms, we refer the reader to \cite[Chapter 5]{bim:book} and \cite{hll}.

We restrict our attention to the case of interest, where $\phi(z)=M+zM^T$, with $M$ defined in \eqref{eq:phi}, thus $m=n$.
In particular we are interested in computing the matrices $X$ and $Y$ which define the deflating subspaces in equation \eqref{eq:de}, and which in turn solve equations \eqref{eq:starnare} and \eqref{eq:dual}, respectively.

First, one constructs a new pencil $N+zK$, right equivalent to $\varphi(z)$, but in the form
\begin{equation}\label{eq:form}
	N=\begin{bmatrix}
	E_0 & 0 \\ -P_0 & I
	\end{bmatrix},\qquad 
	K=\begin{bmatrix}
	I & -G_0\\ 0 & F_0
	\end{bmatrix}.
\end{equation}
This is possible if and only if the matrix $S=\left[\begin{smallmatrix} C^T & D\\  D^T & -B \end{smallmatrix}\right]$ is invertible. Moreover, the matrices $E_0$, $F_0$, $G_0$, $P_0$ can be recovered by comparing the block entries in equation \cite{bmf}
\begin{equation}\label{eq:pencil}
 N+zK=S^{-1}(M+zM^T).
\end{equation}

Then, the doubling algorithm consists in generating the sequences
\begin{equation}\label{eq:SDAiteration}
\begin{split}
	E_{\ell+1}=E_\ell(I-G_\ell P_\ell)^{-1} E_\ell,&\quad P_{\ell+1}=P_\ell+F_\ell(I-P_\ell G_\ell)^{-1} P_\ell E_\ell,\\
	F_{\ell+1}=F_\ell(I-P_\ell G_\ell)^{-1} F_\ell,&\quad G_{\ell+1}=G_\ell+E_\ell(I-G_\ell P_\ell)^{-1}G_\ell F_\ell,\\
	\end{split}
\end{equation}
for $\ell\geq0$, which are well defined if, at each step, $I-G_\ell P_\ell$ and $I-P_\ell G_\ell$ are invertible. The (at least) quadratic convergence of the sequences generated by the doubling algorithm is stated in the following results.
\begin{theorem}\label{thm:SDA}
Let $M$ be as in \eqref{eq:phi} such that the assumptions of Theorem~\ref{thm:SDA0} are satisfied.
 If the sequences
$\{E_\ell\}_\ell,\{F_\ell\}_\ell,\{G_\ell\}_\ell,\{P_\ell\}_\ell$ 
generated by the doubling iteration \eqref{eq:SDAiteration},
with $E_0,F_0,G_0,P_0$ as in \eqref{eq:form} and \eqref{eq:pencil}, are well defined, then for any matrix norm,
\[
\begin{split}
 \limsup_{\ell\to\infty}\sqrt[2^\ell]{\|P_\ell-X\|}\le \tau^2,\qquad&
 \limsup_{\ell\to\infty}\sqrt[2^\ell]{\|H_\ell-Y\|}\le \tau^2,\\
 \limsup_{\ell\to\infty}\sqrt[2^\ell]{\|E_\ell\|}\le \tau,\qquad&
 \limsup_{\ell\to\infty}\sqrt[2^\ell]{\|F_\ell\|}\le \tau,
\end{split}
\]
where $X$ and $Y$ are the solutions to \eqref{eq:starnare} and \eqref{eq:dual} corresponding to the eigenvalues of $\phi(z)$ inside and outside the unit circle, respectively, and
$\tau=\rho\bigl((D^T-B^TX)^{-1}(A-BX)\bigr)=\rho\bigl((D+CY)^{-1}(A^T+C^TY)\bigr)<1$.
\end{theorem}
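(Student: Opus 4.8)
The strategy is to connect the doubling iteration \eqref{eq:SDAiteration} to the deflating-subspace structure established in Theorem~\ref{thm:SDA0}, and then invoke the standard convergence theory for the Structured Doubling Algorithm (SDA) applied to a pencil in the so-called \emph{standard structured form} (SSF) \eqref{eq:form}. First I would set up the algebraic framework: starting from \eqref{eq:pencil}, the pencil $N+zK$ is right-equivalent to $\phi(z)$, hence has the same eigenvalues, so under the hypotheses of Theorem~\ref{thm:SDA0} it has $n$ eigenvalues inside the open unit disk and $n$ outside, with no eigenvalues on the unit circle. The key structural facts are: (i) the SDA iteration \eqref{eq:SDAiteration} is exactly the one that transforms $(E_\ell,F_\ell,G_\ell,P_\ell)$ so that the pencil $N_\ell+zK_\ell$, with $N_\ell,K_\ell$ built from these matrices as in \eqref{eq:form}, is right-equivalent to $(N+zK)$ composed with a "squaring" map on the eigenvalues, i.e.\ $N_\ell + zK_\ell$ carries eigenvalues $\lambda^{2^\ell}$; and (ii) the graph deflating subspaces of $\phi(z)$ given by $\left[\begin{smallmatrix}I\\X\end{smallmatrix}\right]$ and $\left[\begin{smallmatrix}Y\\I\end{smallmatrix}\right]$ in \eqref{eq:de} transfer, via the right-equivalence $S^{-1}$, to graph deflating subspaces of $N+zK$ of the same form (this uses that $S^{-1}$ is applied on the left, which does not change right deflating subspaces, only the matrix $W$ encoding the action). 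Here is where I would need $D^T-B^TX$ and $D-CY$ invertible — already supplied by Theorem~\ref{thm:SDA0} — to guarantee that the relevant subspaces are indeed graph subspaces in the new coordinates.

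Next I would record the closed-form solution of the SDA recursion, which is classical: one shows by induction that
\[
E_\ell = E_{\ell-1}(I-G_{\ell-1}P_{\ell-1})^{-1}E_{\ell-1}, \quad\text{etc.,}
\]
implies $E_\ell$ is (a product representation of) the $2^\ell$-th power of the "inside" part of the pencil acting on the graph subspace, $F_\ell$ the $2^\ell$-th power of the reciprocal of the "outside" part, and $P_\ell - X$, $G_\ell - Y$ are sandwiched products of $E_\ell$ and $F_\ell$. Concretely, with $W=(D^T-B^TX)^{-1}(A-BX)$ and $V=(D+CY)^{-1}(A^T+C^TY)$ as in Theorem~\ref{thm:SDA0} (so $\rho(W)=\rho(V)=:\tau<1$), the standard SDA identities give
\[
E_\ell = (\text{const})\,\widehat W^{2^\ell}(\text{const}),\qquad
P_\ell - X = (\text{const})\,\widehat V^{2^\ell}(\text{const})\,\widehat W^{2^\ell}(\text{const}),
\]
for suitable matrices $\widehat W,\widehat V$ similar to $W,V$ respectively, and analogously for $F_\ell$ and $G_\ell-Y$ (the paper writes $H_\ell$ for the sequence converging to $Y$, presumably $G_\ell$). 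From $\rho(\widehat W)=\rho(\widehat V)=\tau$ and the spectral radius formula $\limsup_\ell \|\widehat W^{2^\ell}\|^{1/2^\ell} = \rho(\widehat W)^{?}$ — more precisely $\|\widehat W^{k}\|^{1/k}\to\tau$, so $\|\widehat W^{2^\ell}\|^{1/2^\ell}\to\tau$ — one reads off $\limsup_\ell \|E_\ell\|^{1/2^\ell}\le\tau$, $\limsup_\ell\|F_\ell\|^{1/2^\ell}\le\tau$, and $\limsup_\ell\|P_\ell-X\|^{1/2^\ell}\le\tau^2$, $\limsup_\ell\|H_\ell-Y\|^{1/2^\ell}\le\tau^2$, which is the claim. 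Norm-equivalence on finite-dimensional spaces makes the statement norm-independent.

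The main obstacle, and the part requiring genuine care rather than bookkeeping, is establishing the closed-form representation of the SDA iterates in a way that is valid under the mere hypothesis that the iteration is \emph{well defined} (i.e.\ all the inverses $(I-G_\ell P_\ell)^{-1}$, $(I-P_\ell G_\ell)^{-1}$ exist), without assuming a priori convergence. The clean route is to show inductively that the pencil $N_\ell+zK_\ell$ remains in SSF and is right-equivalent to the "$2^\ell$-squared" pencil, from which the subspace relations \eqref{eq:de} propagate and the formulas for $E_\ell,F_\ell,G_\ell,P_\ell$ follow; the invertibility of $I-G_\ell P_\ell$ is then automatically tied to the $2^\ell$-th powers not having eigenvalue $1$, which holds because the original eigenvalues avoid the unit circle. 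I would either carry out this induction directly or, more economically, cite the SDA convergence theorem from \cite[Chapter 5]{bim:book} (or \cite{hll}), verifying that our hypotheses — regularity of $\phi(z)$, no eigenvalues on the unit circle, existence of the two graph deflating subspaces in \eqref{eq:de}, and well-definedness of the iteration — match exactly the hypotheses of that theorem, and that the limit quantities $X$, $Y$, and the rate $\tau$ are identified as in Theorem~\ref{thm:SDA0}. The secondary subtlety is the identification $\tau = \rho(W) = \rho(V)$: this is already handled in Theorem~\ref{thm:SDA0}, so here it only needs to be quoted.
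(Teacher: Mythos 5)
Your proposal is correct and follows essentially the same route as the paper: the paper's proof simply observes that Theorem~\ref{thm:SDA0} supplies the relations \eqref{eq:de} (with $\rho(W)=\rho(V)=\tau<1$) and then invokes the general SDA convergence theorem, namely \cite[Thm.~5.3]{bim:book}, exactly the ``economical'' option you describe. Your longer sketch of the closed-form iterates is just an unpacking of what that cited theorem proves, and your reading of $H_\ell$ as $G_\ell$ is the intended one.
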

\begin{proof}
Theorem~\ref{thm:SDA0} implies that \eqref{eq:de} holds, therefore the proof follows from the application of
Theorem 5.3 of \cite{bim:book}.
\end{proof}

The quadratic convergence of the sequence $\{P_\ell\}_{\ell}$ to the solution $X$ can be proved also with the assumption that $\{F_\ell\}_\ell$ is bounded and if the pencil $\varphi(z)$ satisfies the hypotheses of Theorem \ref{thm:uniqe} (see \cite[Thm. 5.3]{bim:book}).

We wish to point out that the application of the Doubling Algorithm here is different  from the case of the NARE \eqref{eq:nare}, where one looks for a graph invariant subspace of the matrix $H$ in \eqref{eq:H} corresponding to eigenvalues on the left/right half plane. Indeed, for solving a NARE with the doubling algorithm, a preprocessing step is needed to get a pencil whose graph invariant subspace of interest has eigenvalues inside the unit disk. This preprocessing consists in applying rational transformations to the pencil $H-z I$, like the (generalized) Cayley transform. The different rational transformations lead to the variants of the DA algorithm, namely
 SDA, SDA with shrink-and-shift, ADDA \cite{hll}, that have been treated differently in the literature. In our case, for the $\stella$-NARE, the eigenvalues are naturally split with respect to the unit circle, so that we can directly apply the Doubling Algorithm starting from the form \eqref{eq:form}.




The Doubling Algorithm for~\eqref{eq:starnare} is reported in Algorithm~\ref{alg:SDA}.

\begin{algorithm}[t]
  \DontPrintSemicolon{}
  \SetKwInOut{Input}{input}\SetKwInOut{Output}{output}
  \Input{$M\in\mathbb{R}^{2n\times 2n}$ as in~\eqref{eq:phi} such that $\phi(z)=M+zM^{\stella}$ is regular and has an $n$-dimensional graph deflating subspace corresponding to
   $n$ eigenvalues inside the open unit disk;\\
  $\ell_{\max}>0$ maximum number of iteration allowed;\\
  $\varepsilon>0$ tolerance.}
  \Output{an approximation $X$ to the solution of~\eqref{eq:starnare} corresponding to the eigenvalues inside the open unit disk.}
  \BlankLine{}
  Compute $N=\begin{bmatrix}
	E_0 & 0 \\ -P_0 & I
	\end{bmatrix}=S^{-1}M$, and 
	$K=\begin{bmatrix}
	I & -G_0\\ 0 & F_0
	\end{bmatrix}=S^{-1}M^{\stella}$ where $S=\begin{bmatrix}
	C^{\stella} & D\\
	D^{\stella} & -B\\
  \end{bmatrix}$ \;
  
  \For{$\ell=0,\ldots,\ell_{\max}$}{
  \While{$\min\{\|E_{\ell}\|_\infty,\|F_{\ell}\|_\infty\}> \varepsilon$}{  
$
\begin{array}{rl}
	E_{\ell+1}=E_\ell(I-G_\ell P_\ell)^{-1} E_\ell,&\quad P_{\ell+1}=P_\ell+F_\ell(I-P_\ell G_\ell)^{-1} P_\ell E_\ell,\\
	F_{\ell+1}=F_\ell(I-P_\ell G_\ell)^{-1} F_\ell,&\quad G_{\ell+1}=G_\ell+E_\ell(I-G_\ell P_\ell)^{-1}G_\ell F_\ell,
	\end{array}
$
} 
   }
   Set $X=P_{\ell}$ \label{finish_line} \;
  \caption{Doubling Algorithm to solve~\eqref{eq:starnare}.\label{alg:SDA}}
\end{algorithm}




\subsection{The ordered palindromic QZ algorithm}\label{sec:palQR}

 The algorithms described in the previous sections do not exploit the palindromic structure of the pencil $\phi(z)=M+zM^\stella$. 
 
 A structured version of the QZ algorithm for palindromic pencils, namely the Palindromic QZ (PQZ) algorithm, has been developed and studied in \cite{m4}, \cite{Schroeder2007}, \cite{ksw09}. The idea is to reduce $\phi(z)$ to a similar pencil $R+zR^\stella$, where $R$ is anti-triangular.
 

We adapt the PQZ algorithm to our problem, endowing it with an ordering procedure on the anti-triangular entries of the matrix $R$ that allows us to deduce the desired deflating subspace.

The first step of the PQZ is the computation of the anti-triangular Schur form of $M$, which exists in view of the following result.

\begin{theorem}[\mbox{\cite[Theorems 2.3--2.5]{m4}}]\label{th:spectrum_pencil}
Let $M\in\mathbb{C}^{2n\times 2n}$. Then there exists a unitary matrix $U\in\mathbb{C}^{2n\times 2n}$ such that
\begin{equation}
 \label{eq:antitriangularform}
 R=U^TMU=\begin{bmatrix}
          0 & \cdots & 0 & r_{1,2n} \\
          \vdots & \iddots & \iddots & \vdots\\
          0 & \iddots & & \vdots\\
          r_{2n,1} & \cdots&\cdots& r_{2n,2n}\\
         \end{bmatrix},
         \end{equation}
 is in anti-triangular form.
 
 Moreover, let the pencil $\phi(z)=M+zM^{\stella}$ be regular and suppose that the matrix $R=U^TMU$ is anti-triangular. Then, the spectrum of $\phi(z)$ is given by 
 \begin{equation}\label{eq:spectrum}
  \left\{\lambda_j\;:\;r_{j,2n-j+1}\cdot\lambda_j=-r_{2n-j+1,j},\quad j=1,\ldots,2n\right\}\subset \mathbb C\cup\{\infty\},
\end{equation}
%
where the list $(\lambda_1,\ldots,\lambda_{2n})$ is reciprocally ordered, namely $\lambda_j=1/\lambda_{2n-j+1}$ for all $j=1,\ldots,2n$.
\end{theorem}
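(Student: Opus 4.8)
Since the statement is quoted from \cite{m4}, the following is only a sketch of how one would prove it. The plan is to establish the two assertions in turn: the existence of a unitary $\stella$-congruence bringing $M$ into the anti-triangular form \eqref{eq:antitriangularform}, and then, under regularity of $\phi(z)=M+zM^{\stella}$, the spectral formula \eqref{eq:spectrum} together with the reciprocal ordering of $(\lambda_1,\dots,\lambda_{2n})$.

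For the existence of the anti-triangular Schur form I would induct on $n$, peeling off a $2\times2$ corner at each step. Let $M$ have even size $2n\ge2$. The step consists in choosing a unit vector $u_1$ and a unit vector $u_{2n}$ with $u_1^{*}u_{2n}=0$, completing them to a unitary matrix $U=[\,u_1\ u_2\ \cdots\ u_{2n}\,]$, and observing that $R:=U^{\stella}MU$ then has its first row equal to $r_{1,2n}e_{2n}^{\stella}$ and its first column equal to $r_{2n,1}e_{2n}$; the central $(2n-2)\times(2n-2)$ block $B=\widehat U^{\stella}M\widehat U$, with $\widehat U=[\,u_2\ \cdots\ u_{2n-1}\,]$, satisfies $\widehat U^{\stella}(M+zM^{\stella})\widehat U=B+zB^{\stella}$ and is therefore again $\stella$-palindromic, so the inductive hypothesis furnishes a unitary $V$ with $V^{\stella}BV$ anti-triangular, and $U(1\oplus V\oplus1)$ brings $M$ to anti-triangular form. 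It remains to see what $u_1$ must satisfy. The first column of $R$ is $U^{\stella}Mu_1$, whose $j$-th entry is $u_j^{\stella}Mu_1$; this vector is a multiple of $e_{2n}$ exactly when $\overline{Mu_1}\in\operatorname{span}(u_{2n})$. Likewise the first row of $R$ is $u_1^{\stella}MU$, a multiple of $e_{2n}^{\stella}$ exactly when $\overline{M^{\stella}u_1}\in\operatorname{span}(u_{2n})$. And $u_1^{*}u_{2n}=0$ together with $u_{2n}\parallel\overline{Mu_1}$ forces $u_1^{\stella}Mu_1=0$. Hence the step reduces to producing a unit vector $u_1$ that is \emph{$M$-isotropic}, $u_1^{\stella}Mu_1=0$, and for which $Mu_1$ and $M^{\stella}u_1$ are linearly dependent; one then takes $u_{2n}$ to be the normalization of $\overline{Mu_1}$ (or of $\overline{M^{\stella}u_1}$ if $Mu_1=0$, or any unit vector orthogonal to $u_1$ if both vanish), and checks $u_1^{*}u_{2n}=0$ using isotropy.

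The existence of such a $u_1$ for an arbitrary complex square matrix of even size $\ge2$ is the crux, and it follows from a short case analysis. If $M$ is singular, any unit vector in $\ker M$ works, since then $Mu_1=0$. If $M$ is nonsingular, the pencil $\phi(z)=M+zM^{\stella}$ is regular with all eigenvalues finite. If $\phi$ has an eigenvalue $\lambda_0\ne-1$ with eigenvector $v$, then $Mv=-\lambda_0M^{\stella}v$, so $Mv$ and $M^{\stella}v$ are linearly dependent; transposing this identity and multiplying by $v$ on the right gives $v^{\stella}M^{\stella}v=-\lambda_0\,v^{\stella}Mv$, and since $v^{\stella}M^{\stella}v=v^{\stella}Mv$ (transpose of a scalar) we get $(1+\lambda_0)v^{\stella}Mv=0$, hence $v$ is $M$-isotropic. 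In the remaining case every eigenvalue of $\phi$ equals $-1$, so in particular $\det(M-M^{\stella})=0$; since $M-M^{\stella}$ is skew-symmetric its rank is even, so $\ker(M-M^{\stella})$ has even dimension $\ge2$, and the complex quadratic form $v\mapsto v^{\stella}Mv$ restricted to this subspace has a nonzero zero $v_0$, which is $M$-isotropic and, lying in $\ker(M-M^{\stella})$, also satisfies $Mv_0=M^{\stella}v_0$. Feeding such a vector into the deflation step and iterating over $n$ produces the anti-triangular form.

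For the spectral part, note that $R$ anti-triangular implies $R^{\stella}$, and hence $R+zR^{\stella}$, anti-triangular, so the only permutation contributing to the determinant is the reversal $i\mapsto2n+1-i$; after reindexing the resulting product this gives
\[
\det(R+zR^{\stella})=\varepsilon\prod_{j=1}^{2n}\bigl(r_{2n-j+1,j}+z\,r_{j,2n-j+1}\bigr),\qquad\varepsilon\in\{-1,1\}.
\]
Because $U$ is unitary, $\det(R+zR^{\stella})=(\det U)^{2}\det\phi(z)$ with $\det U\ne0$, so $\phi(z)$ and $R+zR^{\stella}$ have the same eigenvalues with the same multiplicities; regularity of $\phi(z)$ makes this polynomial not identically zero, hence no factor vanishes identically and for every $j$ at least one of $r_{2n-j+1,j}$, $r_{j,2n-j+1}$ is nonzero. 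The root of the $j$-th factor (understood as $\infty$ when $r_{j,2n-j+1}=0$) is precisely the $\lambda_j$ of \eqref{eq:spectrum}, and these exhaust all $2n$ eigenvalues of $\phi(z)$ counted with multiplicity. Finally, the $j$-th and the $(2n-j+1)$-th factors, namely $r_{2n-j+1,j}+z\,r_{j,2n-j+1}$ and $r_{j,2n-j+1}+z\,r_{2n-j+1,j}$, are obtained from one another by swapping the two coefficients, whence $\lambda_j\lambda_{2n-j+1}=1$ with the conventions $1/0=\infty$ and $1/\infty=0$: the list is reciprocally ordered. The main obstacle throughout is the self-reciprocal eigenvalue $-1$ in the existence argument, which is exactly where the even rank of the skew-symmetric matrix $M-M^{\stella}$ must be used.
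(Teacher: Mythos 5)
The paper itself gives no proof of this statement---it is quoted verbatim from \cite[Theorems 2.3--2.5]{m4}---so there is nothing internal to compare against; judged on its own, your sketch is correct and follows the same deflation-based route as the cited reference. You rightly observe that plain $M$-isotropy of $u_1$ does not suffice for the deflation (one also needs $Mu_1$ and $M^\stella u_1$ to be linearly dependent so that a single last column $u_{2n}$ annihilates both the first row and the first column), and your case analysis---an eigenvector of $M+zM^\stella$ for an eigenvalue $\ne-1$, a kernel vector when $M$ is singular, and an isotropic vector inside the at least two-dimensional (by even rank of the skew-symmetric part) kernel of $M-M^\stella$ when every eigenvalue equals $-1$---closes the only delicate point, while the spectral formula, the multiplicity count at $\infty$, and the reciprocal ordering follow from the anti-diagonal determinant expansion of $R+zR^\stella$ exactly as you write.
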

%
%
%
\begin{corollary}\label{cor:reciprocalfreeset}
 With the assumptions in Theorem~\ref{th:spectrum_pencil}, 
 if $\phi(z)$ has only zero or two eigenvalues of modulus $1$, then the subset 
 \[
 \left\{\lambda_1,\ldots,\lambda_n\right\}
 \]
 of the spectrum~\eqref{eq:spectrum} of $\phi(z)$ is reciprocal-free.
\end{corollary}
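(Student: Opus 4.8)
The plan is to read off the structure of $\{\lambda_1,\dots,\lambda_n\}$ from two properties of the list supplied by Theorem~\ref{th:spectrum_pencil}: the pairing $\lambda_j=1/\lambda_{2n-j+1}$, and the fact that the anti-triangular Schur form may be (and, after the ordering step of Section~\ref{sec:palQR}, is) chosen so that the moduli are nondecreasing, $|\lambda_1|\le\cdots\le|\lambda_{2n}|$. For $j\le n$ the pairing gives $|\lambda_j|\,|\lambda_{2n-j+1}|=1$ with $2n-j+1>n$, so monotonicity forces $|\lambda_j|\le 1$; hence $\lambda_1,\dots,\lambda_n$ all lie in the closed unit disk, and they touch the unit circle only at unimodular eigenvalues of $\phi(z)$, of which there are at most two by hypothesis.

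I would then argue by contradiction. If $\{\lambda_1,\dots,\lambda_n\}$ is not reciprocal-free there are $i,j\in\{1,\dots,n\}$ with $\lambda_i\lambda_j=1$; taking moduli and using $|\lambda_i|,|\lambda_j|\le 1$ forces $|\lambda_i|=|\lambda_j|=1$, so $\lambda_i$, $\lambda_j$, and also $\lambda_{2n-i+1}=1/\lambda_i$ and $\lambda_{2n-j+1}=1/\lambda_j$, are unimodular eigenvalues of $\phi(z)$. If $i\ne j$ the four indices $i,\,j,\,2n-i+1,\,2n-j+1$ are pairwise distinct (the first two are $\le n$, the last two are $>n$), exhibiting four eigenvalues of modulus $1$ counted with multiplicity, contradicting the hypothesis. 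Hence $i=j$ and $\lambda_i^2=1$, i.e.\ $\lambda_i\in\{1,-1\}$; but the pairing already places this value at the two distinct indices $i\le n$ and $2n-i+1>n$, so it occurs with multiplicity at least two, i.e.\ the two unimodular eigenvalues are $\{1,1\}$ or $\{-1,-1\}$. Excluding this last possibility closes the argument; the case of no unimodular eigenvalue is identical, the step $|\lambda_i|=|\lambda_j|=1$ being already impossible there.

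The step I expect to be the real obstacle is precisely that last one. By the definition of a reciprocal-free set, $1$ and $-1$ may never belong to it, so the statement is literally correct only once ``two eigenvalues of modulus $1$'' is read as a genuine reciprocal pair $\{\mu,1/\mu\}$ with $\mu\ne\pm 1$ --- a non-real complex-conjugate pair in the real setting, which is the one relevant to the applications. In the final write-up I would either add this clause or replace the hypothesis by ``$\phi(z)$ has no eigenvalue of modulus $1$ except possibly one conjugate pair $\{\mu,\bar\mu\}$ with $\mu\notin\mathbb R$''. A secondary point worth making explicit is that the conclusion concerns the modulus-ordered anti-triangular form returned by the ordered PQZ of Section~\ref{sec:palQR}: an arbitrary reciprocally ordered list --- e.g.\ $(\mu,1/\mu,\mu,1/\mu)$ when $\{\mu,1/\mu\}$ has modulus $\ne 1$ and multiplicity two --- can have a leading half that is not reciprocal-free, so monotonicity of the moduli is genuinely used.
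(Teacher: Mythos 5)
Your argument does not prove the corollary as it is stated, because its engine --- the assumption that the list is modulus-sorted, so that $\lambda_1,\dots,\lambda_n$ lie in the closed unit disk --- is not among the hypotheses and is not satisfied by the objects the corollary is applied to. The corollary is stated under the assumptions of Theorem~\ref{th:spectrum_pencil} alone, whose only ordering information is the reciprocal pairing $\lambda_j=1/\lambda_{2n-j+1}$, and it is invoked precisely for the anti-triangular form produced by the PQZ \emph{before} any reordering (that is its role in the theorem that follows it); the ordering step of Section~\ref{sec:palQR} is a later postprocessing which itself leans on this corollary, so you cannot import it here. Concretely, in Example~\ref{Ex.3} PQZ with no reordering returns a form whose leading $n$ eigenvalues lie \emph{outside} the unit circle, so the monotonicity you rely on fails and your key step ``$|\lambda_i|,|\lambda_j|\le 1$ forces $|\lambda_i|=|\lambda_j|=1$'' is unavailable. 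The paper's proof uses a different, ordering-free mechanism: if $\lambda_i\lambda_j=1$ with $i\ne j$, $i,j\le n$, the pairing puts the same two values again at the mirrored positions $2n-j+1$ and $2n-i+1$, so the reciprocal pair would occur at least twice in the spectrum~\eqref{eq:spectrum}; on the unit circle this is excluded by the ``zero or two'' hypothesis, and a self-reciprocal unimodular eigenvalue (necessarily of multiplicity two here) lands only once in the leading half. In short, the burden is carried by a multiplicity count through the reciprocal pairing, not by modulus monotonicity.

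That said, two of your side remarks are well taken, and they point at genuine rough edges of the statement rather than of your reading. First, with the paper's literal definition a set containing $1$ or $-1$ cannot be reciprocal-free, so when the two unimodular eigenvalues are a double $\pm 1$ the conclusion has to be understood as ``$\lambda_i\lambda_j\ne 1$ for $i\ne j$'' (which is what the isotropy argument behind Theorem~\ref{thm:main} needs); the paper's proof indeed only records that such an eigenvalue ``appears only once in the first half.'' Second, your reciprocally ordered list $(\mu,1/\mu,\mu,1/\mu)$ is a legitimate caveat: it shows the pairing argument tacitly excludes repeated off-circle eigenvalues whose reciprocals also recur (e.g.\ it implicitly assumes such eigenvalues are simple), since nothing in the stated hypotheses forbids an anti-triangular form realizing that list. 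But this does not show that modulus monotonicity is ``genuinely used'': the paper never uses it, and in the simple-eigenvalue case the pairing argument needs no ordering at all; monotonicity is merely one possible way to repair the degenerate multiple-eigenvalue case, at the cost of proving a different (sorted) variant of the corollary.
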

\begin{proof}
 The result directly comes from the fact that the spectrum~\eqref{eq:spectrum} is reciprocally ordered. Moreover, if $\lambda$ is an eigenvalue of $\phi(z)$ with modulus 1 and multiplicity 2, then 
 it appears only once in the first half of the spectrum, namely $\left\{\lambda_1,\ldots,\lambda_n\right\}
 $.
\end{proof}

In the next theorem we show how to construct the solution $X$ to~\eqref{eq:starnare} by exploiting the anti-triangular Schur form~\eqref{eq:antitriangularform} of $M$. To this end, we consider the following $2\times 2$ block partition of the unitary matrix $U$,
\begin{equation}\label{eq:Upartition}
 U=\begin{bmatrix}
    U_{11} & U_{12}\\
    U_{21} & U_{22}\\
\end{bmatrix}, \quad U_{ij}\in\mathbb{C}^{n\times n},\;i,j=1,2.
\end{equation}
\begin{theorem}
Let $\phi(z)=M+zM^T$ with $M$ as in \eqref{eq:phi}. Assume that $\phi(z)$ is regular, with exactly zero or two eigenvalues of modulus~$1$. Let $U$ be a unitary matrix such that $R=U^TMU$ is anti-triangular and its $(1,1)$-block $U_{11}$ with the notation in~\eqref{eq:Upartition} is nonsingular. Then the matrix
$X=U_{21}U_{11}^{-1}$ is a solution to~\eqref{eq:starnare}.
\end{theorem}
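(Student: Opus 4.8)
The plan is to deduce the statement from the converse part of Theorem~\ref{thm:main}: I will show that the columns of $\left[\begin{smallmatrix} I\\X\end{smallmatrix}\right]$, with $X=U_{21}U_{11}^{-1}$, span a graph deflating subspace of $\varphi(z)$ associated with the reciprocal-free set $\{\lambda_1,\dots,\lambda_n\}$, so that $X$ automatically solves \eqref{eq:starnare}.

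First I would transport the anti-triangular structure into a deflating-subspace statement. Since $\stella=T$, we have $U^\stella\varphi(z)U=R+zR^\stella$; note this is a \emph{congruence}, not a similarity, because $U$ is only unitary, so $(U^\stella)^{-1}=\overline U$. Consequently, if $RV=WR'$ and $R^\stella V=WR''$ for some full-column-rank $V,W$, then $\varphi(z)(UV)=(\overline U W)(R'+zR'')$; hence a deflating subspace of $R+zR^\stella$ with co-subspace spanned by $W$ is carried by $U$ to a deflating subspace of $\varphi(z)$ with co-subspace spanned by $\overline U W$, with the \emph{same} associated pencil, and the full column rank of $UV$ and $\overline U W$ is automatic from unitarity of $U$.

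Next I would apply this with $V=\left[\begin{smallmatrix} I\\0\end{smallmatrix}\right]$ and $W=\left[\begin{smallmatrix} 0\\I\end{smallmatrix}\right]$. Because $R$ is anti-triangular, its first $n$ columns have support only in rows $n+1,\dots,2n$, and the same holds for $R^\stella$, which is anti-triangular as well; therefore $RV=W\,R_{21}$ and $R^\stella V=W\,R_{12}^\stella$, where $R_{ij}$ denotes the $(i,j)$ block in the conformal $2\times2$ partition of $R$. So $\mathrm{span}(e_1,\dots,e_n)$ is a deflating subspace of $R+zR^\stella$ with associated $n\times n$ pencil $R_{21}+zR_{12}^\stella$, which is again anti-triangular; a short index computation identifies its anti-diagonal pencil entries as $r_{2n+1-j,j}+z\,r_{j,2n+1-j}$, $j=1,\dots,n$, and comparison with \eqref{eq:spectrum} (using regularity of $\varphi(z)$ to guarantee these pairs are not all zero, so that $R_{21}+zR_{12}^\stella$ is regular) shows that its eigenvalues are exactly $\lambda_1,\dots,\lambda_n$. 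Transporting back, the columns of $U\left[\begin{smallmatrix} I\\0\end{smallmatrix}\right]=\left[\begin{smallmatrix} U_{11}\\U_{21}\end{smallmatrix}\right]$ span a deflating subspace of $\varphi(z)$ associated with $\{\lambda_1,\dots,\lambda_n\}$.

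Finally, since $U_{11}$ is nonsingular, $\left[\begin{smallmatrix} U_{11}\\U_{21}\end{smallmatrix}\right]=\left[\begin{smallmatrix} I\\X\end{smallmatrix}\right]U_{11}$ with $X=U_{21}U_{11}^{-1}$, so the same subspace is the graph subspace spanned by $\left[\begin{smallmatrix} I\\X\end{smallmatrix}\right]$; and by Corollary~\ref{cor:reciprocalfreeset} the set $\{\lambda_1,\dots,\lambda_n\}$ is reciprocal-free because $\varphi(z)$ has at most two eigenvalues of modulus $1$. The converse part of Theorem~\ref{thm:main} then gives that $X$ solves \eqref{eq:starnare}. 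The main obstacle is not conceptual but a matter of care: getting the congruence-versus-similarity point right (so that the deflating-subspace transport is valid) and carrying out the index bookkeeping that matches the anti-diagonal of $R_{21}+zR_{12}^\stella$ with the reciprocally ordered list in \eqref{eq:spectrum}; the rest is routine.
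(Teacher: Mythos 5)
Your proof is correct and follows essentially the same route as the paper's: identify the span of $\left[\begin{smallmatrix} U_{11}\\ U_{21}\end{smallmatrix}\right]$ as the deflating subspace of $\varphi(z)$ associated with $\{\lambda_1,\dots,\lambda_n\}$, invoke Corollary~\ref{cor:reciprocalfreeset} for reciprocal-freeness, and conclude via the converse part of Theorem~\ref{thm:main}. The only difference is that you carefully spell out, through the congruence $\varphi(z)U=\overline{U}(R+zR^{\stella})$ and the anti-triangular block structure giving the associated pencil $R_{21}+zR_{12}^{\stella}$ with eigenvalues matching~\eqref{eq:spectrum}, what the paper's proof states merely as ``by construction''.
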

\begin{proof}
 By construction, the columns of the matrix $\begin{bmatrix}
              U_{11}\\
              U_{21}\\
             \end{bmatrix}
$
span the deflating subspace of $\phi(z)$ corresponding to the first $n$ eigenvalues $\{\lambda_1,\ldots,\lambda_n\}$. Similarly, the columns of
\[
\begin{bmatrix}
              U_{11}\\
              U_{21}\\
             \end{bmatrix}U_{11}^{-1}=\begin{bmatrix}
              I\\
              U_{21}U_{11}^{-1}\\
             \end{bmatrix},
\]
span the same subspace.

Since $\{\lambda_1,\ldots,\lambda_n\}$ is a reciprocal-free set of eigenvalues as shown in Corollary~\ref{cor:reciprocalfreeset}, the matrix $X=U_{21}U_{11}^{-1}$ is a solution to~\eqref{eq:starnare} thanks to Theorem~\ref{thm:main}.
\end{proof}
The PQZ scheme is thus very similar to its unstructured counterpart. However,
from a computational point of view, the design of structure-preserving algorithms for the anti-triangular Schur form~\eqref{eq:antitriangularform} of $M$ is a rather challenging task.

Different schemes have been illustrated in the literature and in all the numerical examples we present in this paper we employ~\cite[Algorithm 3.5]{m4} equipped with the palindromic QR procedure~\cite{Schroeder2007}.  

We must mention that the solution constructed from the first $n$ columns of the matrix $U$ obtained from the PQZ algorithm is associated with a reciprocal-free set $\left\{\lambda_1,\ldots,\lambda_n\right\}$ of eigenvalues of the pencil $\varphi(z)$, determined by the lower $n$ antidiagonal entries of $R+zR^T$. Nevertheless, one might be interested in a solution associated with a different set of reciprocal-free eigenvalues of $\varphi(z)$, for instance the set of eigenvalues lying inside or outside the unit disk, if $\det(\varphi(z))\ne 0$ on the unit circle.
A remedy is to postprocess the pencil $R+zR^T$ by applying a reordering procedure that puts the desired eigenvalues on the lower part of the antidiagonal, so that the first columns of the modified matrix $U$ yield the desired solution. Taking inspiration from \cite[Section 4.1]{ksw09} we now illustrate the procedure that guarantees the computation of the solution $X$ to~\eqref{eq:starnare} associated with the $n$ eigenvalues which lie inside the unit disk when the latter set exists. In principle, the same procedure can be applied to get a solution associated with any given reciprocal-free set of eigenvalues of $\varphi(z)$.


Given the $2n$ eigenvalues $\{\lambda_1,\ldots,\lambda_{2n}\}$ ordered as in~\eqref{eq:spectrum}, we start by defining 
$$k_1=\max\{i~|~i=1,\ldots,2n,~|\lambda_i|<1\}-n.$$
Clearly, $0\le k_1\le n$.
If $k_1=0$, we do not need to reorder any column of $U$ and $X=U_{21}U_{11}^{-1}$ is the sought solution.


If $k_1>0$, we consider the following $2k_1\times 2 k_1$ submatrix $R^{(1)}$ of $R$ 
\[
R^{(1)}=[e_{n-k_1+1},\ldots,e_{k_1+n}]^{\stella}R[e_{2-k_1+1},\ldots,e_{k_1+n}],
\]
partitioned as follows
\[
R^{(1)}=
\begin{bmatrix}
              0 & 0 & R_{13}^{(1)}   \\
              0&  R_{22}^{(1)} & R_{23}^{(1)}\\
              R_{31}^{(1)} & R_{32}^{(1)} & R_{33}^{(1)}\\
             \end{bmatrix},
\]
where $R_{13}^{(1)}$, $R_{31}^{(1)}$, and $R_{33}^{(1)}$ are scalars, $R_{22}\in\mathbb{R}^{2(k_1-1)\times 2(k_1-1)}$ is anti-triangular, and $R_{23}^{(1)}\in\mathbb{R}^{2(k_1-1)\times 1}$ while $R_{32}^{(1)}\in\mathbb{R}^{1\times2(k_1-1)}$.

We need to compute a $\stella$-congruence transformation such that
\[
\begin{bmatrix}
 W & Z^{\stella} & 1\\
 Y^{\stella} & I_{k_1-2} &0 \\
 1 & 0 &0\\
\end{bmatrix}\begin{bmatrix}
              0 & 0 & R_{13}^{(1)}   \\
              0&  R_{22}^{(1)} & R_{23}^{(1)}\\
              R_{31}^{(1)} & R_{32}^{(1)} & R_{33}^{(1)}\\
             \end{bmatrix}\begin{bmatrix}
 W & Y & 1\\
 Z & I_{k_1-2} & 0 \\
 1 & 0 &0 \\
\end{bmatrix}=
\begin{bmatrix}
              0 & 0 & R_{31}^{(1)}   \\
              0&  R_{22}^{(1)} & 0\\
              R_{13}^{(1)} & 0 & 0\\
             \end{bmatrix},
\]
where $W$ is a scalar, $Z\in\mathbb{R}^{2(k_1-1)\times 1}$, and $Y\in\mathbb{R}^{1\times 2(k_1-1)}$.

A direct computation shows that the vectors $Y$ and $Z$ can be computed by solving the following system of $\stella$-Sylvester equations
\begin{equation}\label{eq:TSylvester}
\left\{\begin{array}{rll}
         R_{31}^{(1)}Y+Z^{\stella}R_{22}^{(1)}&=&-R_{32}^{(1)},\\
         R_{13}^{(1)}Y+Z^{\stella}(R_{22}^{(1)})^{\stella}&=&-(R_{23}^{(1)})^{\stella}.
       \end{array}\right.
\end{equation}
The numerical solution of~\eqref{eq:TSylvester} can be carried out by, e.g., \cite[Algorithm 2]{dipr19}. In our numerical experiments, since the matrix variable are just vectors, we solve the linear system of equations arising by Kronecker transformations, namely 
\[
\begin{bmatrix}
 R_{31}^{(1)}\cdot I_{2(k_1-1)} & (R_{22}^{(1)})^{\stella}\\
 R_{13}^{(1)}\cdot I_{2(k_1-1)} & R_{22}^{(1)}\\
\end{bmatrix}
\begin{bmatrix}
 \text{vec}(Y)\\
 \text{vec}(Z^{\stella})\\
\end{bmatrix}
=
-\begin{bmatrix}
 \text{vec}(R_{32}^{(1)})\\
 \text{vec}((R_{23}^{(1)})^{\stella})\\
\end{bmatrix}.
\]
Once $Y$ and $Z$ are computed, we define
\[
W= -(R_{33}^{(1)}+R_{32}^{(1)}Z+Z^{\stella}R_{23}^{(1)}+Z^{\stella}R_{22}^{(1)}Z)/(R_{31}^{(1)}+R_{13}^{(1)}),
\]
and perform a QR factorization 
\[
\begin{bmatrix}
 W & Y & 1 \\
 Z & I_{2(k_1-1)} & 0 \\
 1 & 0 & 0 \\
\end{bmatrix}=P^{(1)}G.
\]
Then the first $n$ columns of the matrix 

\[
U^{(1)}=\begin{bmatrix}
 U^{(1)}_{11} & U^{(1)}_{12} \\
 U^{(1)}_{21} & U^{(1)}_{22} \\
\end{bmatrix}=
U\begin{bmatrix}
  I_{n-k_1} & 0 &0 \\
  0& P^{(1)} & 0\\
  0&0 & I_{n-k_1} \\
 \end{bmatrix},
\]
span a deflating subspace related to the set of eigenvalues $\{\lambda_i,\;i=1,\ldots,n,\,i\neq n-k_1+1\}\cup\{\lambda_{k_1+n}\}$.

We proceed by considering
\[
k_2=\max\{i~|~i=1,\ldots,k_1+n-1,~|\lambda_i|<1\}-n,
\]
and we repeat the same exact steps as before by defining $R=(U^{(1)})^{\stella}MU^{(1)}$. 

This procedure is iterated until 
\[
k_j=\max\{i~|~i=1,\ldots,
k_{j-1}+n-1,~|\lambda_i|<1\}-n.
\]
is equal to 0,
for a certain $j\geq 1$.

In conclusion, we construct the solution $X= U^{(j)}_{21}(U_{11}^{(j)})^{-1}$ which is related to the desired set of eigenvalues.

The overall palindromic QZ procedure is summarized in Algorithm~\ref{alg:palindromicQZ}.

\begin{algorithm}[t]
  \DontPrintSemicolon{}
  \SetKwInOut{Input}{input}\SetKwInOut{Output}{output}
  \Input{$M\in\mathbb{R}^{2n\times 2n}$ as in \eqref{eq:phi} and such that $\phi(z)=M+zM^{\stella}$ is regular.}
  \Output{$X$ approximate solution to~\eqref{eq:starnare}.}
  \BlankLine{}
  Compute the Schur anti-triangular form of $M$, namely the matrices $U$ and $R$ in~\eqref{eq:antitriangularform}, by~\cite[Algorithm 3.5]{m4}\;
  
  If needed, apply a reordering procedure on the diagonal of $R$, getting $\wt U$ such that its first $n$ columns span the deflating subspace corresponding to the eigenvalues inside/outside the unit disk; otherwise, set $\wt U=U$\;
  
  If $\wt U=\begin{bmatrix}
   \wt U_{11} & \wt U_{12}\\
   \wt U_{21} & \wt U_{22}\\                                                                                                   \end{bmatrix}$ is such that $\wt U_{11}$ is nonsingular,
  set $X=\wt U_{21}\wt U_{11}^{-1}$   \;
  \caption{PQZ algorithm to solve~\eqref{eq:starnare}.\label{alg:palindromicQZ}}
\end{algorithm}

%
%

\section{Numerical examples}\label{sec:test}
In this section we illustrate the numerical behavior of the proposed algorithms for  nonsymmetric $\stella$-NAREs~\eqref{eq:starnare}: the invariant subspace methods presented in the previous sections, namely the Doubling Algorithm, and the standard and palindromic QZ methods. 
We study the properties of the computed solutions and their relation to the spectrum of $\phi(z)$ as outlined in Section~\ref{sec:line} and compare them to the solutions computed by the Newton method for small-scale $\stella$-NAREs proposed in~\cite{bp20}. Unless stated otherwise, in all the following examples we always use the value $\varepsilon=10^{-12}$ in the convergence checks of both the Newton method and DA.

Results were obtained by running Matlab R2017b on a laptop with an Intel Core i3
processor running at 2GHz using 3.5GB of RAM.

The implementation of QZ is based on the Matlab routine {\tt ordqz}, the implementation of DA follows the code enclosed to \cite{bim:book}, while the implementation of PQZ is based on~\cite{m4}.  We note that
\cite[Algorithm 3.5]{m4} needs an input parameter $\alpha$; see \cite{m4} for more details. In all the examples reported in Section~\ref{sec:test} we use $\alpha=1.1$. From our numerical experience, the performance of the solver turned out to be quite robust under small changes of the parameter~$\alpha$.


\begin{num_example}\label{Ex.1}
 {\rm
 We consider the equation in Example 4.1 of \cite{bp20}, where the coefficient matrices defining the $T$-NARE~\eqref{eq:starnare} are
\[
A=\begin{bmatrix}
 -1 & -1 & & \\
   & -1  & -1 \\
   & & \hspace{-.6cm}\ddots&\hspace{-.6cm}\ddots \\
   &   &    -1  &  -1 \\
   & & & -1 \\
\end{bmatrix},\;
D=\begin{bmatrix}
 4 & -1 & & \\
   & 4  & -1 \\
   & & \hspace{-.6cm}\ddots&\hspace{-.6cm}\ddots \\
   &   &    4  &  -1 \\
   & & & 4 \\
\end{bmatrix},\; E=\begin{bmatrix}
 -1 & -1 & & \\
   & -1  & -1 \\
   & & \hspace{-.6cm}\ddots&\hspace{-.6cm}\ddots \\
   &   &    -1  &  -1 \\
   & & & -0.9 \\
\end{bmatrix},
\]
together with $B=-A/\|A\|_F$, and $C=E/\|E\|_F$.

As shown in~\cite{bp20}, there exists a nonnegative minimal solution $X_{\min}$ to this $T$-NARE and the Newton method with $X_0=0$ converges to $X_{\min}$.
 
 In Table~\ref{Ex1_table1} we report the relative residual norm 
 \[\|R\|_F=\frac{\|DX+X^{\stella}A-X^{\stella}BX+C\|_F}{\|X\|_F},
 \]
achieved by the numerical solution $X$ computed by DA, QZ, PQZ and the Newton scheme along with the relative distance between the Newton solution $X_{New}$ and the solution $X$ computed by the invariant subspace methods, namely $\| X_{New}-X\|_F/ \| X_{New}\|_F$, for different values of $n$.
 
 
 
 
 
 \setlength{\tabcolsep}{4pt}
  \begin{table}[t]
  \centering
   \begin{tabular}{c|c c cccc}
  & \multicolumn{2}{c}{$n=100$}
   & \multicolumn{2}{c}{$n=300$} & \multicolumn{2}{c}{$n=500$}\\
   & Rel. Res. & Rel. Dist. & Rel. Res. & Rel. Dist. &Rel. Res. & Rel. Dist. \\
   \hline
 
    PQZ&  3.11e-13 & 7.69e-13& 1.60e-12&5.51e-14 & 3.67e-12 & 9.49e-14\\
   QZ&1.70e-13&7.74e-13& 1.01e-12&3.88e-14&2.25e-12&6.59e-14\\
   
   DA& 8.64e-16&7.72e-13&6.36e-16&7.74e-15&7.76e-16&1.22e-14\\
   Newton&1.60e-12 & -- & 1.29e-13 & -- &  2.24e-13 &--\\
%
%

\end{tabular}
\caption{Example~\ref{Ex.1}. ``Rel. Res.'' denotes the relative residual norm achieved by PQZ, QZ, DA and Newton's scheme from~\cite{bp20}; ``Rel. Dist.'' the relative distance between the solutions computed by the invariant subspace methods and the one obtained by the Newton scheme. } \label{Ex1_table1}
\end{table}

From the results in Table~\ref{Ex1_table1} we can notice that the solutions computed by the methods we tested are very closed to each other. This means that for this example, also the invariant subspace methods are able to compute a very accurate approximation to the actual minimal nonnegative solution.

\begin{figure}[t]
  \centering
  \includegraphics[scale=0.7]{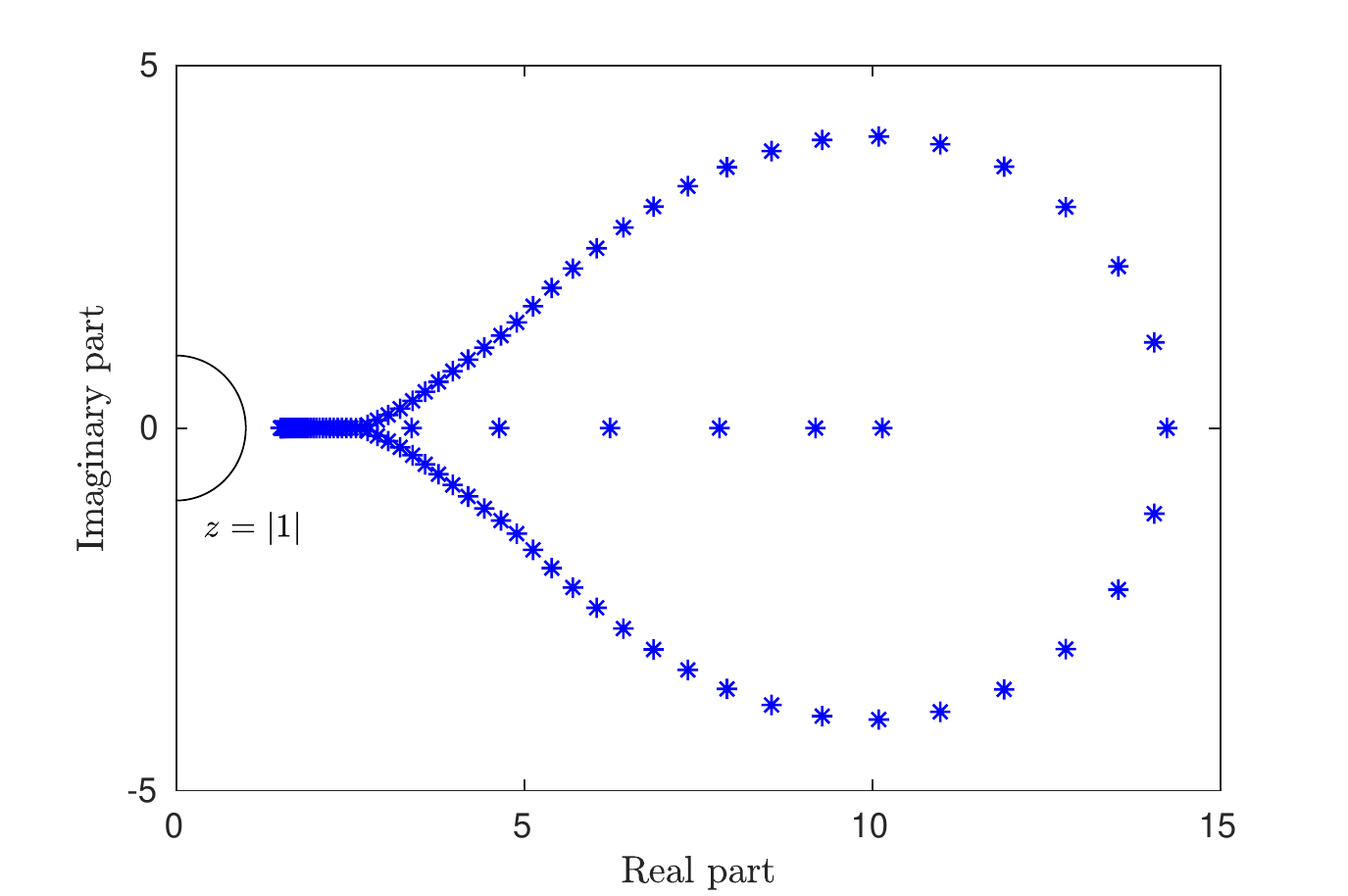}
\caption{Example~\ref{Ex.1}. Eigenvalue distribution of the pencil $\alpha(z)=A-BX+z(D^\stella-B^TX)$ where $X$ is the solution computed by the PQZ method for $n=100$. The black, solid line indicates (a portion of) the boundary of the unit disk.} \label{Ex1_fig1}
 \end{figure}

 In Figure~\ref{Ex1_fig1} we report the eigenvalue distribution of the matrix pencil $\alpha(z)=A-BX+z(D^\stella-B^TX)$ for $n=100$, where $X$ is the solution computed by the PQZ method, and we can see that the spectrum of $\alpha(z)$ is outside the unit circle. This happens for all the values of $n$ we tested. Therefore, for this example, the solution we compute can be characterized in two different ways. Indeed, it can be viewed as the unique minimal nonnegative solution as shown in~\cite{bp20}. On the other hand, Theorem~\ref{thm:main} says that it is also the unique solution such that the eigenvalues of $\alpha(z)$ are all outside the unit circle. 

To conclude, in Table~\ref{Ex1_table2} we report the running time of the PQZ method, QZ, DA, and the Newton procedure. We notice that the DA algorithm turns out to be the fastest method we tested. Even though this scheme needs more iterations than the Newton method to converge, each of these iterations is quite cheap as it involves only inversion and linear combinations of matrices of size $n$. On the other hand, the solution of a $\stella$-Sylvester equation needed at each Newton step is rather expensive increasing the computational cost of the overall scheme. 
The most demanding step of the PQZ and QZ methods is
the 
computation of the unitary matrices
$Q$ and $Z$ for the pencil $\varphi(z)$.
However,
for this example, both the latter algorithms 
are slightly faster than the Newton scheme.

   \begin{table}[t]
  \centering
   \begin{tabular}{ c| c c c c}
  $n$      & PQZ &QZ & DA &Newton \\
   \hline
   100 & 1.61e-1 & 
    1.67e-1& 3.19e-2 (7)&1.25e0 (3)\\

    300 &  3.44e0&4.28e0 &2.20e-1 (7)& 6.25e0 (3) \\

500& 1.87e1&2.28e1 &2.34e0 (7)& 2.00e1 (3)
\\

\end{tabular}
\caption{Example~\ref{Ex.1}. Computational time in seconds achieved by PQZ, QZ, DA, and Newton's scheme from~\cite{bp20}. For DA and Newton's method we report in brackets also the number of iterations needed to achieve the prescribed accuracy.} \label{Ex1_table2}
\end{table}

 }
\end{num_example}

\begin{num_example}\label{Ex.2}
 {\rm
 We consider Example~4.2 in \cite{bp20} where 
 the matrices $A,D\in\mathbb{R}^{n\times n}$ come from the finite difference discretization on the unit square of 2-dimensional differential operators
 equipped with homogeneous Dirichlet boundary conditions, while $B,C\in\mathbb{R}^{n\times n}$ are full random matrices.

 As before, we solve the related $\stella$-NARE by means of the PQZ method, QZ, DA, and the Newton method and in Table~\ref{Ex2_table1} we report the results.
 
 We wish to point out that, for this example, the properties of the coefficient matrices do not guarantee the existence of a unique minimal nonnegative solution. Nevertheless, in~\cite{bp20} it is shown that the Newton method is able to compute an accurate numerical solution in terms of relative residual norm. 
 
 \begin{table}[t]
  \centering
   \begin{tabular}{ c|c c cc}
  & \multicolumn{2}{c}{$n=324$}
   & \multicolumn{2}{c}{$n=784$} \\
   & Rel. Res. & Rel. Dist. & Rel. Res. & Rel. Dist. \\
   \hline
 
    PQZ&  7.01e-13 & 5.35e-14& 1.00e-12&4.52e-14 \\
   QZ&6.41e-13&5.23e-14& 1.14e-12&4.35e-14\\
   
   DA& 7.51e-14&4.95e-14&2.71e-11&4.87e-14\\
   Newton&1.67e-12 & -- & 7.98e-13 & -- \\
\end{tabular}
\caption{Example~\ref{Ex.2}. ``Rel. Res.'' denotes the relative residual norm achieved by PQZ, QZ, DA, and Newton's scheme from~\cite{bp20}; ``Rel. Dist.'' the relative distance between the solutions computed by the invariant subspace methods and the one obtained by Newton's scheme.} \label{Ex2_table1}
\end{table}

 From the results in Table~\ref{Ex2_table1} we can notice that also for this example the invariant subspace methods and  Newton's scheme basically compute the same numerical solution. In particular, for both the values of $n$ we tested, this solution is the unique solution such that the spectrum of $\alpha(z)$ is outside the unit circle. Therefore, even though
 we cannot characterize the computed solution in terms of its minimality, its uniqueness is guaranteed by Theorem~\ref{thm:main}. 
 
%
%
%
%
%
 
We compare 
the PQZ method, QZ, DA, and Newton's procedure also from a computational perspective and in Table~\ref{Ex2_table2} we report the results. 
Conclusions similar to the ones illustrated in Example~\ref{Ex.1} can be drawn.
The DA algorithm is still the fastest method we tested while  Newton's method suffers whenever a sizable number of iterations is needed to converge.
The PQZ and QZ methods perform better than Newton's method but are one order of magnitude slower than DA.

 \begin{table}[t]
  \centering
   \begin{tabular}{ c| c c c c}
  $n$      & PQZ &QZ & DA &Newton \\
   \hline
   324 & 3.62e0 & 
    4.51e0& 2.39e-1 (8)&1.66e1 (5)\\

    784 &  6.94e1&9.01e1 &3.11e0 (10)& 1.75e2 (8) \\

\end{tabular}
\caption{Example~\ref{Ex.2}. Computational time in seconds achieved by PQZ, QZ, DA, and Newton's scheme from~\cite{bp20}. For DA and Newton's method we report in brackets also the number of iterations needed to achieve the prescribed accuracy.} \label{Ex2_table2}
\end{table}
}
 
 \end{num_example}

\begin{num_example}\label{Ex.3}
{\rm
In this example we compare only the results achieved by the PQZ method and Newton's scheme. We consider the following $2\times 2$ problem
\[
\begin{array}{cccc}
D=\begin{bmatrix}
 1 & 0\\
 -0.1 & 2\\
\end{bmatrix}, & 
A=\begin{bmatrix}
 1 & -0.2\\
 -0.1 & 2\\
\end{bmatrix},&
B=\begin{bmatrix}
 0.2 & 0.1\\
 0.3 & 0.4\\
\end{bmatrix}, &
C=\begin{bmatrix}
 -0.1 & -0.1\\
 -0.1 & -0.1\\
\end{bmatrix}\\
  \end{array}.
\]
PQZ with no reordering computes the following solution 
\[
X_{out}\approx\begin{bmatrix}
         20.1028 & -25.4499 \\
         -11.5037 & 14.6980\\
          \end{bmatrix},
\]
which is the unique solution such that the spectrum of 
\[
\alpha_{out}(z)=A-BX_{out}+z(D^\stella-B^\stella X_{out}),
\]
is given approximately by $\{-1.09484,  -1.05880\}$ and lies outside the unit circle. Thanks to the strategy presented at the end of Section~\ref{sec:palQR}, we are also able to construct the matrix 
\[
X_{in}\approx\begin{bmatrix}
         2.6923 &   3.6756 \\
   1.9569 &  2.6749 \\  
         \end{bmatrix},
\]
which approximates the unique solution such that the eigenvalues of $\alpha_{in}(z)=A-BX_{in}+z(D^\stella-B^\stella X_{in})$ are inside the unit circle. The spectrum of $\alpha_{in}$ is approximately given by $\{-0.91338,
  -0.94447\}$. Both $X_{out}$ and $X_{in}$ achieve a relative residual norm of the order $\mathcal{O}(10^{-12})$.
  
  It is interesting to notice that Newton's method applied to this example computes neither $X_{out}$ nor $X_{in}$. The solution 
  \[
  X_{New}\approx\begin{bmatrix}
                 0.0490 &    0.1541\\
   -0.0220  &  0.0385\\
              \end{bmatrix},
\]
computed by Newton's method with zero initial guess is such that the spectrum of $\alpha_{New}(z)$ is approximately given by $\{-0.94447,-1.09484\}$. The latter is a reciprocal-free set but does
not lie inside/outside the unit circle. This example shows that the zero initial value does not implies the convergence of Newton's method to the unique solution $X$ related to a pencil $\alpha(z)$ whose eigenvalues are all inside/outside the unit circle. The choice of a better suited initial value is a topic that may be worth exploring and it will be the subject of future work.
} 
\end{num_example}

\begin{num_example}\label{Ex.4}
{\rm
In the last example we compare the accuracy in terms of forward error achieved by PQZ, QZ and DA with respect to a reference solution computed with high precision arithmetic.

Even though the numerical results reported in the previous examples show that the DA algorithm is very competitive in terms of running time, this method
does not exploit the palindromic structure of the problem with a possible loss of accuracy. Also the (plain) QZ method shares this drawback.

For a given $n$, we start by defining the matrix $\wt M\in\mathbb{R}^{2n\times 2n}$ entry-wise as follows


\[
 \wt M\;:\;\left\{\begin{array}{llll}
   \wt M_{i,2n-i+1}&=&i+1,& i=1,\ldots,n-1,\\
   \wt M_{2n-i+1,i}&=&1/(i+1),& i=1,\ldots,n-1,\\
   \wt M_{n+1,n}&=&\sigma+1,&\\
   \wt M_{n,n+1}&=&1/(\sigma+1),&\\
   \wt M_{i,j}&=&0,&i<j,\\  
   \wt M_{i,j}&=&1/5,& i>j,\\
  \end{array}\right.
\]
where $\sigma>0$ is given. 


Then the matrix $M$ in~\eqref{eq:phi} is defined as $M = N\wt MN^{\stella}$ where 
$$N=\begin{bmatrix}
   1 & 1 & \cdots & 1\\
   -1 & 1 &\cdots & 1\\
   \vdots& \ddots& \ddots& \vdots\\
   -1 & \cdots & -1& 1\\
  \end{bmatrix}.
$$
Finally, we partition $M$ as in~\eqref{eq:phi} and choose the coefficient matrices $A$, $D$, $B$, and $C$ defining equation~\eqref{eq:starnare} accordingly. 

Observe that the eigenvalues of the pencil $\phi(z)$ are
$\lambda_i,\lambda_i^{-1}$, $i=1,\ldots,n$, where $\lambda_1=1/(1+\sigma)^2$, and $\lambda_i=1/i^2$ for $i=2,\ldots,n$. Therefore, the smaller $\sigma$, the narrower the separation of the eigenvalues with respect to the unit circle. 

We choose $\sigma=10^{-10}$ and we construct the matrix $M$ 
by making use of the Matlab function {\tt vpa} which allows us to use variable precision arithmetic. In particular, we employ 100 decimal digit accuracy. 

We first solve the obtained equation by DA equipped with {\tt vpa}. The computed solution is considered as reference solution. Notice that the DA algorithm is very well-suited for the use of {\tt vpa} as it only involves linear combinations and inversions of matrices. 

We then solve the same equation by PQZ, QZ, and DA, all equipped with double precision arithmetic.

In Table~\ref{Ex4_table1} we report the relative forward error achieved by the aforementioned schemes, namely
$\| X_{Exact}-X\|_F/ \| X_{Exact}\|_F$, where $X_{Exact}$ is the solution computed by using the \texttt{vpa} arithmetic with 100 significant digits and rounded to double precision.

\begin{table}[t]
  \centering
   \begin{tabular}{ r| r r r }
  $n$      & PQZ &QZ & DA  \\
   \hline
   3 & 2.72e-16 & 
    9.29e-7& 7.41e-6\\

    4 &  4.95e-15&4.02e-5 &1.02e-5 \\

\end{tabular}
\caption{Example~\ref{Ex.4}. Relative forward error achieved by PQZ, QZ, and DA. The reference solution is computed by DA equipped by {\tt vpa}.} \label{Ex4_table1}
\end{table}
We can notice that, for this example, the PQZ method is able to achieve a very small relative forward error by solely relying on the full exploitation of the palindromic structure of the problem. On the other hand, QZ and DA completely neglect this property and they end up constructing less accurate solutions.

} 
\end{num_example}

%
%
%
%
%
%

\section{Conclusions}
The main result of this paper relates the solution of $\stella$-NAREs to deflating subspaces of a specific matrix pencil.
Such a novel relation allowed us to design methods to solve 
$\stella$-NAREs based on invariant subspace algorithms. In particular, the PQZ, QZ and DA schemes have been proposed and largely tested. We showed how DA is the fastest algorithm, while the PQZ scheme is able to achieve better accuracy in terms of forward error thanks to its fully exploitation of the palindromic structure of the underlying eigenvalue problem. 

\section*{Acknowledgements}
We thank C. Mehl for providing us with some Matlab code implementing the structured deflation scheme for palindromic pencils presented in~\cite{m4} to compute the anti-triangular Schur form of $M$.

 The last three authors are members of the Italian
research group Indam-GNCS, whose support is gratefully acknowledged. 

Part of this work was
carried out while the fourth author was affiliated with the Max Planck Institute for Dynamics of
Complex Technical Systems in Magdeburg, Germany.


\begin{thebibliography}{10}

\bibitem{bb06}
P.~Benner and R.~Byers.
\newblock An arithmetic for matrix pencils: theory and new algorithms.
\newblock {\em Numer. Math.}, 103(4):539--573, 2006.

\bibitem{bp20}
P.~Benner and D.~Palitta.
\newblock {On the solution of the nonsymmetric T-Riccati equation}.
\newblock {\em Electron. Trans. Numer. Anal.}, 54:68--88, 2021.

\bibitem{bim:book}
D.~A. Bini, B.~Iannazzo, and B.~Meini.
\newblock {\em Numerical Solution of Algebraic Riccati Equations}.
\newblock Society for Industrial and Applied Mathematics, 2011.

\bibitem{borobia}
A.~Borobia, R.~Canogar, and F.~De~Ter\'an.
\newblock On the consistency of the matrix equation {$X^\top AX= B$} when {$B$}
  is symmetric.
\newblock {\em Mediterranean Journal of Mathematics}, 18(2), 2021.

\bibitem{bk}
R.~Byers and D.~Kressner.
\newblock Structured condition numbers for invariant subspaces.
\newblock {\em SIAM J. Matrix Anal. Appl.}, 28(2):326--347, 2006.

\bibitem{dd11}
F.~De~Ter{\'a}n and F.~M. Dopico.
\newblock Consistency and efficient solution for the {S}ylvester equation for
  $\star$-congruence: {$AX+X^*B=C$}.
\newblock {\em Electron. J. Linear Algebra}, 22:849--863, 2011.

\bibitem{di16}
F.~De~Ter{\'a}n and B.~Iannazzo.
\newblock Uniqueness of solution of a generalized {$\star$}-{S}ylvester matrix
  equation.
\newblock {\em Linear Algebra Appl.}, 493:323--335, 2016.

\bibitem{dipr18}
F.~De~Ter{\'a}n, B.~Iannazzo, F.~Poloni, and L.~Robol.
\newblock Solvability and uniqueness criteria for generalized {S}ylvester-type
  equations.
\newblock {\em Linear Algebra Appl.}, 542:501--521, 2018.

\bibitem{dipr19}
F.~De~Ter\'{a}n, B.~Iannazzo, F.~Poloni, and L.~Robol.
\newblock Nonsingular systems of generalized {S}ylvester equations: an
  algorithmic approach.
\newblock {\em Numer. Linear Algebra Appl.}, 26(5):e2261, 29, 2019.

\bibitem{dgks16}
F.~M. Dopico, J.~Gonz\'{a}lez, D.~Kressner, and V.~Simoncini.
\newblock Projection methods for large-scale {$T$}-{S}ylvester equations.
\newblock {\em Math. Comput.}, 85:2427--2455, 2016.

\bibitem{glr}
I.~Gohberg, P.~Lancaster, and L.~Rodman.
\newblock {\em Matrix Polynomials}.
\newblock SIAM, 2009.

\bibitem{gvl}
G.~H. Golub and C.~F. van Loan.
\newblock {\em Matrix Computations}.
\newblock John Hopkins University Press, Baltimore, fourth edition, 2013.

\bibitem{guo12}
C.-H. Guo.
\newblock On algebraic {R}iccati equations associated with {$M$}-matrices.
\newblock {\em Linear Algebra Appl.}, 439(10):2800--2814, 2013.

\bibitem{hll}
T.-M. Huang, R.-C. Li, and W.-W. Lin.
\newblock {\em Structure-preserving doubling algorithms for nonlinear matrix
  equations}, volume~14 of {\em Fundamentals of Algorithms}.
\newblock Society for Industrial and Applied Mathematics (SIAM), Philadelphia,
  PA, 2018.

\bibitem{jp}
E.~Jarlebring and F.~Poloni.
\newblock Iterative methods for the delay lyapunov equation with t-sylvester
  preconditioning.
\newblock {\em Applied Numerical Mathematics}, 135:173--185, 2019.

\bibitem{ksw09}
D.~Kressner, C.~Schr{\"o}der, and D.~S. Watkins.
\newblock Implicit {QR} algorithms for palindromic and even eigenvalue
  problems.
\newblock {\em Numer. Algorithms}, 51(2):209--238, 2009.

\bibitem{m4}
D.~S. Mackey, N.~Mackey, C.~Mehl, and V.~Mehrmann.
\newblock Numerical methods for palindromic eigenvalue problems: computing the
  anti-triangular {S}chur form.
\newblock {\em Numer. Linear Algebra Appl.}, 16(1):63--86, 2009.

\bibitem{bmf}
F.~Poloni and T.~Reis.
\newblock A structure-preserving doubling algorithm for {L}ur'e equations.
\newblock {\em Numer. Linear Algebra Appl.}, 23:169--186, 2016.

\bibitem{Schroeder2007}
C.~Schr\"{o}der.
\newblock {URV decomposition based structured methods for palindromic and even
  eigenvalue problems}.
\newblock Technical Report 375, TU Berlin, {\sc matheon}, 2007.

\end{thebibliography}

\end{document}